\newtheorem{theorem}{Theorem}[section]
\newtheorem{lemma}[theorem]{Lemma}
\newtheorem{proposition}[theorem]{Proposition}
\newtheorem{corollary}[theorem]{Corollary}
\theoremstyle{definition}
\newtheorem{definition}[theorem]{Definition}
\newtheorem{remark}[theorem]{Remark}
\numberwithin{equation}{section}
\numberwithin{subsection}{section}
\newcommand{\sing}{{\rm Sing}}
\newcommand\supp{{\rm spt}}
\newcommand\res{\mathop{\hbox{\vrule height 7pt width .3pt depth 0pt
\vrule height .3pt width 5pt depth 0pt}}\nolimits}
\newcommand{\mass}{{\mathbf{M}}}
\newcommand{\cone}{{\times\hspace{-0.6em}\times\,}}
\newcommand{\de}{\partial}
\newcommand{\bI}{{\mathbf{I}}}
\def\a#1{\left\llbracket{#1}\right\rrbracket}
\newcommand{\cH}{{\mathcal{H}}}
\newcommand{\B}{{\mathbf{B}}}
\newcommand\proj{\mathbf{p}}
\def\XXint#1#2#3{{\setbox0=\hbox{$#1{#2#3}{\int}$ }
\vcenter{\hbox{$#2#3$ }}\kern-.6\wd0}}
\newcommand\N{{\mathbb N}}
\newcommand\R{{\mathbb R}}
\newcommand{\eps}{{\varepsilon}}
\newcommand{\bA}{\mathbf{A}}
\newcommand{\cF}{{\mathcal{F}}}
\newcommand{\Lip}{{\rm {Lip}}}
\newcommand{\dist}{{\rm {dist}}}
\newcommand{\cA}{{\mathcal{A}}}
\newcommand{\bG}{{\mathbf{G}}}
\newcommand{\bM}{{\mathbf{M}}}
\newcommand{\bC}{{\mathbf{C}}}
\title[Regularity over smooth cones]{(log-)epiperimetric inequality and regularity over smooth cones for almost Area-Minimizing currents}
\author{Max Engelstein, Luca Spolaor, Bozhidar Velichkov}
\address {Max Engelstein: \newline \indent
	Massachusetts Institute of Technology (MIT), 
	\newline \indent
	77 Massachusetts Avenue, Cambridge 
	MA 02139, USA
}
\email{maxe@mit.edu}
\address {Luca Spolaor: \newline \indent
	Princeton University, Department of Mathematics, Fine Hall, 
	\newline \indent
	Washington Road, Princeton 
	NJ 08544, USA
	}
\email{lspolaor@mit.edu}
\address {Bozhidar Velichkov: \newline \indent
	Laboratoire Jean Kuntzmann (LJK), Universit\'e Grenoble Alpes
	\newline \indent
	B\^atiment IMAG, 700 Avenue Centrale, 38401 Saint-Martin-d'H\`eres}
\email{bozhidar.velichkov@univ-grenoble-alpes.fr}
\thanks{The first author was supported by an NSF MSPRF DMS-1703306. The third author has been partially supported by the projects LabEx PERSYVAL-Lab GeoSpec (ANR-11-LABX-0025-01) and ANR CoMeDiC. Finally, the authors kindly acknowledge Maria Colombo for the useful discussions and the helpful comments in the preparation of this manuscript. }
\begin{document}

\begin{abstract}
	We prove a new logarithmic epiperimetric inequality for multiplicity-one stationary cones with isolated singularity by flowing in the radial direction any given trace along appropriately chosen directions. In contrast to previous epiperimetric inequalities for minimal surfaces (e.g. \cite{Reif2}, \cite{Taylor1,Taylor2}, \cite{Wh}), we need no {\it a priori} assumptions on the structure of the cone (e.g. integrability).  If the cone is integrable (not only through rotations), we recover the classical epiperimetric inequality. As a consequence we deduce a new $\eps$-regularity result for almost area-minimizing currents at singular points where at least one blow-up is a multiplicity-one cone with isolated singularity. This result is similar to the one for stationary varifolds of L. Simon \cite{Simon0}, but independent from it since almost minimizers do not satisfy any equation. 
\end{abstract}

\maketitle

\section{Introduction}

In this paper we prove a new (log-)epiperimetric inequality for multiplicity-one smooth minimal cones. To give the precise statement, we recall the notion of spherical graph over a cone and of integrability. Let $\bC\subset \R^{n+k}$ be a multiplicity-one stationary cone and suppose that $\Sigma:=\bC\cap \de B_1$ is a smooth embedded compact submanifold of $\de B_1$. Given a function $u\in C^{1,\alpha}(\bC,\bC^\perp)$, we define its \emph{spherical graph} over $\bC$, in polar coordinates, and its \emph{renormalized volume} to be respectively 
$$
\bG_{\bC}(u):=\left\{r\frac{r\theta+u(r,\theta)}{\sqrt{r^2+|u(r,\theta)|^2}}\,:\, r\theta\in \bC \right\}
\quad\mbox{and}\quad
\cA_\bC(u):=\cH^{n}(\bG_{\bC}(u))-\cH^{n}(\bC\cap B_1)\,.
$$
Given a cone $\bC$, we say that $\bC$ is \emph{integrable} if every Jacobi field on $\bC$ is generated by a one parameter family of minimal cones; that is, if for every $1$-homogeneous solution $\phi$ of the second variation $\delta^2\cA_{\bC}(0)$, there exists a one-parameter family $(\Phi_t)_{|t|<1}$ of diffeomorphisms such that $\Phi_0=Id$, $\frac{d}{dt}\Phi_t=\phi(\Phi_t)$ and  
\begin{equation}\label{e:int}
\Phi_t(\bC)\quad\mbox{is a minimal cone with $\sing(\Phi_t(\bC))=\{0\}$ for every $|t|<1$}\,.
\end{equation} 

The (log-)epiperimetric inequality then says, roughly, that stationary cones are quantitatively isolated (as measured by $\cA_{\bC}$) in the space of cones:

\begin{theorem}[(Log-)epiperimetric inequality for multiplicity-one smooth cones]\label{t:epi_int}
	Let $\bC\subset \R^{n+k}$ be an $n$-dimensional multiplicity-one stationary cone. There exist constants $\eps, \delta>0$ and $\gamma\in [0,1)$ depending on the dimension and $\bC$ such that the following holds. Let $c \in C^{1,\alpha}(\Sigma, \bC^\perp)$ be such that $\|c\|_{C^{1,\alpha}}\leq \delta$, then there exists a function $h\in H^1(\bC\cap B_1,\bC^\perp) $ such that $h|_{\de B_1}=c$ and 
	\begin{equation}\label{e:log_epi}
	\cA_\bC(h)\leq \left(1-\eps\,|\cA_\bC(z)|^\gamma\right) \,\cA_\bC(z)	\,,
	\end{equation}
	where $z(x):=|x|\,c(x/|x|)$ is the one-homogeneous extension of $c$. If the cone $\bC$ is \emph{integrable}, then we can take $\gamma=0$. 
\end{theorem}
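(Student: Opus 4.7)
\emph{Overall strategy.} My plan is to follow the spectral / Lojasiewicz--Simon blueprint developed for obstacle-type problems, adapted to the area functional on small normal graphs over $\bC$. I first Taylor-expand
\[
\cA_\bC(h)=\tfrac12\,Q_\bC(h)+\cE(h),\qquad Q_\bC(h):=\delta^2\cA_\bC(0)[h,h],
\]
where $Q_\bC$ is the quadratic form of the Jacobi operator $L_\bC$ on the cone and $\cE$ collects cubic and higher terms; standard graph computations give $|\cE(h)|\leq C\|h\|_{C^1}\|h\|_{H^1}^2$, so for $\|c\|_{C^{1,\alpha}}\leq\delta$ small enough, $\cE$ is absorbed into any quadratic gain of order $\|h\|_{H^1}^2$. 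I then decompose the trace $c$ into eigenfunctions of the Jacobi operator on $\Sigma$, treat non-critical frequencies by $\alpha$-homogeneous radial extensions, and reserve the delicate analysis for the critical frequency $\alpha=1$, where a Lojasiewicz--Simon inequality on the finite-dimensional space of one-homogeneous Jacobi fields produces the exponent $\gamma$; integrability bypasses the Lojasiewicz step and forces $\gamma=0$.

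\emph{Step 1: non-critical frequencies.} Let $\{\phi_j,\mu_j\}$ be the $L^2(\Sigma)$-eigenpairs of the spherical Jacobi operator on $\Sigma$, and write $c=\sum_j a_j\phi_j$. The indicial equation of $L_\bC$ on trial functions $r^\alpha\phi(\theta)$ assigns to each $\mu_j$ a characteristic homogeneity $\alpha_j$, with $\alpha=1$ critical. For $\alpha_j\neq 1$ I pick
\[
h_j(r,\theta):=a_j\,r^{\alpha_j}\phi_j(\theta),
\]
the $Q_\bC$-minimiser among radial extensions with trace $a_j\phi_j$; direct computation yields a definite quadratic gain
\[
Q_\bC(h_j)-Q_\bC(a_j\,r\,\phi_j)\leq -\eta\,(\alpha_j-1)^2\,\|a_j r\phi_j\|_{H^1}^2
\]
with $\eta>0$ depending only on $\bC$, summable over the modes whose $\alpha_j$ is uniformly bounded away from $1$; a separate perturbation argument handles the finitely many exceptional modes with $\alpha_j$ close to (but $\neq$) $1$.

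\emph{Step 2: critical frequency and Lojasiewicz step.} Let $K\subset H^1(\bC\cap B_1,\bC^\perp)$ denote the finite-dimensional space of one-homogeneous Jacobi fields, and $K^{\rm int}\subset K$ the integrable subspace generated by the one-parameter families $\Phi_t(\bC)$ of \eqref{e:int}. Split $c=c_\perp+c_K$, with $c_K$ the projection onto $K|_\Sigma$; Step~1 handles $c_\perp$. For the integrable part of $c_K$ I solve the finite-dimensional implicit equation $\Phi_{t(c_K)}(\bC)\cap\partial B_1\simeq c_K$, producing a competitor of essentially zero excess. When $\bC$ is integrable this exhausts $K$ and gives $\gamma=0$. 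Otherwise, on the non-trivial complement $K\ominus K^{\rm int}$ I invoke a Lojasiewicz--Simon inequality for the analytic functional $v\mapsto\cA_\bC(v)$ restricted to the finite-dimensional manifold of one-homogeneous perturbations: there exist $C>0$ and $\theta\in[\tfrac12,1)$ such that
\[
\|\nabla \cA_\bC(v)\|\geq C\,|\cA_\bC(v)|^\theta\quad\text{for all } v\in K\ominus K^{\rm int},\ \|v\|\ll 1.
\]
A single step of steepest descent in the direction $-\nabla\cA_\bC$, starting from the one-homogeneous extension of $c_K$ -- precisely the radial flow announced in the abstract -- decreases the excess by at least $\eps\,|\cA_\bC(z_K)|^{2\theta}$; setting $\gamma:=2\theta-1\in[0,1)$ yields \eqref{e:log_epi}.

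\emph{Assembly and main obstacle.} Finally I glue $h:=h_\perp+h_K$; orthogonality of the Jacobi eigenfunctions kills the quadratic cross terms, while the cubic remainder from the Taylor expansion and the nonlinearity of the integrable reparametrisation are absorbed into the quantitative gains of Steps~1--2 by choosing $\delta$ small. The main difficulty is the Lojasiewicz--Simon estimate: one must verify that $\cA_\bC$ is analytic in appropriate Sobolev coordinates on an $H^1$-neighbourhood of $0$, run a Lyapunov--Schmidt reduction onto $K$, and extract a uniform Lojasiewicz exponent. The integrability hypothesis \eqref{e:int} rules out higher-order degeneracies by forcing $K=K^{\rm int}$, which is exactly why the classical exponent $\gamma=0$ is recovered in that case.
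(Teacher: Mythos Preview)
Your high-level plan --- spectral decomposition of the trace via the Jacobi operator, improved extensions on non-kernel modes, and a \L ojasiewicz argument on the finite-dimensional kernel --- is the paper's strategy. But two of your implementation choices are genuine gaps, not just stylistic differences.

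First, the splitting $K=K^{\rm int}\oplus(K\ominus K^{\rm int})$ is not well-defined: the set of integrable Jacobi fields is the tangent \emph{cone} at $0$ to the analytic variety of nearby critical points of $\cA_\Sigma$ and need not be a linear subspace, so you cannot ``subtract off the integrable part'' and then run \L ojasiewicz on a complement. The paper instead performs the Lyapunov--Schmidt reduction \emph{first}, producing an analytic function $A:K\to\R$, $A(\mu)=\cA_\Sigma(\mu+\Upsilon(\mu))$, and applies the finite-dimensional \L ojasiewicz inequality directly to $A$; integrability is then characterised as $A\equiv 0$ near $0$ (Lemma~\ref{l:intimpliescost}), which yields $\gamma=0$ without any decomposition of $K$. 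Relatedly, the role of $\Upsilon$ is not merely to set up \L ojasiewicz: the identity $P_{K^\perp}\delta\cA_\Sigma(\mu+\Upsilon(\mu))=0$ is precisely what kills the first-order terms when Taylor-expanding the orthogonal piece (see \eqref{e:eperp1}), controlling the nonlinear interaction between kernel and non-kernel directions. Your assembly step (``orthogonality kills the quadratic cross terms'') addresses only the second-order interaction and misses this.

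Second, for the non-kernel modes you propose the $Q_\bC$-minimiser $r^{\alpha_j}\phi_j$. This is fine when the indicial roots are real and one of them gives an $H^1$ extension, but for the index directions of $\bC$ (eigenvalues $\lambda_j<0$ of $\delta^2\cA_\Sigma(0)$) the indicial roots may be complex and $Q_\bC$ restricted to that mode is unbounded below, so no $H^1$ minimiser exists. The paper sidesteps indicial analysis entirely: a slicing lemma bounds $\cA_\bC(rg)$ by $\int_0^1\cA_\Sigma(g(r,\cdot))\,r^{n-1}\,dr$ plus a radial-derivative error, and the competitor is then built by leaving the negative-eigenvalue component of $c$ \emph{unchanged} (it already decreases $\cA_\Sigma$), flowing only the positive-eigenvalue component linearly toward zero via a cutoff $\eta_+(r)=1-(1-r)\eps$, and flowing the kernel component along the normalised gradient flow of $A$ reparametrised by $\eta(r)\sim\eps_A A(\mu^0)^{1-\gamma}(1-r)$. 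This last point also clarifies what ``a single step of steepest descent'' must mean here: it has to be realised as an $r$-dependent deformation whose radial derivative is small enough to be absorbed.
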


An epiperimetric inequality (i.e. \eqref{e:log_epi} with $\gamma = 0$) was first proven for regular points in the celebrated work of Reifenberg \cite{Reif2}, and later extended to branch points of $2$-dimensional area minimizing currents by White \cite{Wh} and to singular points of $2$-dimensional area minimizing flat chains modulo $3$ and $(\bM,\eps,\delta)$-minimizers by Taylor \cite{Taylor2,Taylor1}. In all these situations, the admissible blow-ups are cones which are \emph{integrable through rotations} (see Remark \ref{rmk:int_rot}). However there exist cones with isolated singularities which are not integrable and for which the rate of blow-up has  logarithmic decay (see \cite{Na} and Remarks 5.3 and 5.4 in \cite{AS}). Since a (classical) epiperimetric inequality implies an exponential rate of decay, we cannot hope that \eqref{e:log_epi} with $\gamma = 0$ holds for all cones. Instead we prove what is called a (log-)epiperimetric inequality, that is \eqref{e:log_epi}, with $\gamma \in [0,1)$.

 We remark that (log-)epiperimetric inequalities were introduced by the second and third named author, together with Maria Colombo, in the context of the obstacle and thin-obstacle problems \cite{CoSpVe1,CoSpVe2}; however, the proof in that setting is substantially different (and simpler). The proof of Theorem \ref{t:epi_int} bears more similarity to our recent work on isolated singularities of the Alt-Caffarelli functional, \cite{EnSpVe}; indeed this result was inspired by our work in \cite{EnSpVe}. This method seems to be very flexible and we hope to apply it to other problems (for example, Yang-Mills) and to the more difficult case of higher order singularities. 

\begin{remark}
The final steps of the proof of Theorem \ref{t:epi_int} are inspired by the beautiful work of Simon \cite{Simon0}, where the author proved uniqueness of blow-up at singularities of stationary varifolds in which at least one blow-up is a multiplicity-one cone with isolated singularity. A similar approach for generic singularities of the mean curvature flow, but with an entirely new proof of an infinite dimensional \L ojasiewicz inequality, has recently been given by Colding and Minicozzi (see \cite{CoMi}). However our approach doesn't need the surface to satisfy any PDE and is purely variational, thus allowing us to deal with almost-minimizers.
\end{remark}

\begin{remark}\label{rmk:int_rot} Recall that a cone $\bC$ is \emph{integrable through rotation}, if the family $(\Phi_t)_{|t|<1}$ in \eqref{e:int} is given by $\Phi_t = \exp (tA)$, where $A$ is any fixed $n\times n$ skew symmetric matrix. We observe that a simple modification of White's proof of the epiperimetric inequality for $2$-dimensional area minimizing cones (see \cite{Wh}) would establish an epiperimetric inequality for multiplicity-one cones with isolated singularity that are integrable through rotations. However, our proof of Theorem \ref{t:epi_int} is different than \cite{Wh, Taylor1,Taylor2}, and allows us to assume the more general notion of integrability  \eqref{e:int}, under which no epiperimetric inequality exists in the literature. In particular, this allows us to give an alternative proof of the beautiful work of Allard and Almgren \cite{AllAlm}.
\end{remark}

As a consequence of Theorem \ref{t:epi_int}, we prove a new uniqueness of the blow-up result for almost area-minimizing currents. This result is similar to the one of Leon Simon for stationary varifolds (see \cite{Simon0}), however, as mentioned above, the two results are independent from each other since stationarity and almost-minimality are independent properties. We use here standard notations for integral currents (see for instance \cite{Sim}).

\begin{definition}[Almost-Minimizers]\label{d:alm_min}
An $n$-dimensional integer rectifiable current $T$ in $\R^{n+k}$ is \emph{almost (area) minimizing} if for every $x_0\in \supp (\partial T)$ there are constants 
$C_{0}, r_0, \alpha_0 > 0$ such that 
\begin{equation}\label{e:almost minimizer2}
\|T\| (B_r(x)) \leq \|T + \partial S\|(B_r(x)) + C_{0}\, r^{n +\alpha_0}
\end{equation}
for all $0<r<r_0$ and for all integral $(n+1)$-dimensional currents $S$ supported in $B_r (x)$.
\end{definition}

\noindent For any given integer rectifiable current $R\in \bI_n (\R^{n+k})$ we define the flat norm of $R$ to be 
$$\cF (R) := \inf \{ \mass (Z) + \mass (W) : Z\in \bI_n, W\in \bI_{n+1}, Z + \partial W = R\}.$$ 

\begin{theorem}[Uniqueness of smooth tangent cone for almost minimizers]\label{t:uniq}
	Let $T\in \bI_n$ be an almost area-minimizing current and let $x_0\in\supp (T)$. Suppose that there exists a multiplicity one area minimizing cone $\bC$ such that $\bC\cap \de B_1$ is a smooth embedded orientable submanifold of $\de B_1$ and $\bC$ is a blow-up of $T$ at $x_0$. Then $\bC$ is the \emph{unique blow-up} of $T$ at $x_0$ and there exists constants $\gamma \in (0,1), C, r_0>0$, depending on $\bC$ and $n$, such that 
	\begin{gather}
	\cF((T-\bC)\res B_r)\leq C (-\log(r/r_0))^{\frac{\gamma-1}{2\gamma}}\,\qquad0<r<r_0 \,\label{e:roc1}\\
	\dist\big(\supp(T\res \B_r(x)), \bC\big)\leq C \,(-\log(r/r_0))^{\frac{\gamma-1}{2\gamma}}\,\qquad0<r<r_0 .\label{e:roc2}
	\end{gather}
	If the cone $\bC$ is integrable, then the above logarithms can be replaced by powers of $(r/r_0)$.
\end{theorem}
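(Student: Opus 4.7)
The strategy is a Weiss--Reifenberg scheme adapted to the almost-minimizing setting: Theorem~\ref{t:epi_int} provides a differential inequality for a mass-type excess, and a Simon-style telescoping then yields uniqueness of the blow-up together with the quantitative rates.

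Assume $x_0=0$ and set $m(r):=r^{-n}\|T\|(B_r)$, $W(r):=m(r)-\cH^n(\bC\cap B_1)$. Almost-minimality gives the usual monotonicity of $m$ up to a $Cr^{\alpha_0}$ error, and the hypothesis that $\bC$ is a blow-up forces $W(r)\to 0$ as $r\to 0$, so the rescalings $T_r:=(\eta_{0,r})_\sharp T$ converge to $\bC$ in flat norm. Since $\Sigma=\bC\cap\partial B_1$ is smooth, $\bC$ is smooth off the origin; Allard's $\eps$-regularity, applied on dyadic annuli away from $0$, yields a small $C^{1,\alpha}$ function $v_r$ on $\bC\cap(B_1\setminus B_{\rho(r)})$, with $\rho(r)\to 0$, such that $T_r\res(B_1\setminus B_{\rho(r)})=\bG_\bC(v_r)$, and the density bound controls the mass in the inner ball by $O(\rho(r)^n+r^{\alpha_0})$. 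Choosing $\rho(r)$ appropriately (e.g.\ $\rho(r)\sim r^{\alpha_0/n}$) one obtains $W(r)=\cA_\bC(v_r)+O(r^{\alpha_0})$ and $\|v_r\|_{C^{1,\alpha}(\Sigma)}\le\delta$.

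Now apply Theorem~\ref{t:epi_int} to the trace $c_r:=v_r|_{\partial B_1}$: this produces $h_r$ with $h_r|_{\partial B_1}=c_r$ and $\cA_\bC(h_r)\le(1-\eps|\cA_\bC(z_r)|^\gamma)\cA_\bC(z_r)$, where $z_r$ is the one-homogeneous extension of $c_r$. Gluing $\bG_\bC(h_r)$ to $T_r$ along $\partial B_1$ yields an admissible integral competitor in $B_1$, and almost-minimality of $T$ at scale $r$ therefore gives
$$\cA_\bC(v_r)\le(1-\eps|\cA_\bC(z_r)|^\gamma)\cA_\bC(z_r)+Cr^{\alpha_0}.$$
Combined with a Weiss-type monotonicity identity for almost-minimizers (which controls $rW'(r)$ by a radial conical-defect plus an $O(r^{\alpha_0-1})$ error and forces $W(r)\approx\cA_\bC(v_r)\approx\cA_\bC(z_r)$), this produces on dyadic scales $r_k=2^{-k}r_0$ the discrete ODE
$$W(r_{k+1})\le W(r_k)-\eps\, W(r_k)^{1+\gamma}+Cr_k^{\alpha_0},$$
whose iteration gives $W(r_k)\le Ck^{-1/\gamma}=C(-\log(r_k/r_0))^{-1/\gamma}$ when $\gamma>0$, and $W(r_k)\le C(r_k/r_0)^\eta$ for some $\eta>0$ in the integrable case $\gamma=0$.

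Uniqueness and the sharp rates follow by a Simon-style telescoping. The same Weiss identity yields $\|v_{r_k}-v_{r_{k+1}}\|_{L^2(\Sigma)}^2\le C(W(r_k)-W(r_{k+1}))+Cr_k^{\alpha_0}$; a weighted Cauchy--Schwarz argument with summation by parts (choosing a weight $s\in(1,1/\gamma)$ and using $W(r_j)\le Cj^{-1/\gamma}$) gives
$$\sum_{j\ge k}\|v_{r_j}-v_{r_{j+1}}\|_{L^2}\le Ck^{(\gamma-1)/(2\gamma)}.$$
Hence $\{v_{r_k}\}$ is Cauchy in $L^2(\Sigma)$, which yields uniqueness of $\bC$ as blow-up, and the same bound transfers, via Schauder interpolation with the uniform $C^{1,\alpha}$ control, to the flat and Hausdorff estimates \eqref{e:roc1} and \eqref{e:roc2}; in the integrable case the geometric decay of $W$ replaces logarithms by powers of $r/r_0$. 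I expect the main obstacle to be the graphical parametrization near the isolated singularity: producing $T_r=\bG_\bC(v_r)$ with an inner-ball mass defect controlled by $r^{\alpha_0}$ is delicate because $T$ may have complicated singular behavior approaching $0$, and the integral competitor on $B_1$ must be built to have the same boundary as $T_r$ on $\partial B_1$. A close second difficulty is establishing the sharp Weiss monotonicity identity in the almost-minimizing setting so that it cleanly splits into a scale-change term controlling $\|v_{r_k}-v_{r_{k+1}}\|_{L^2}$ and a conical-comparison term matching $\cA_\bC(z_r)-\cA_\bC(v_r)$, with residual errors combining with the epiperimetric inequality to yield the discrete ODE above.
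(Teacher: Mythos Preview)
Your plan follows the same architecture as the paper: epiperimetric inequality plus almost-minimality produces an ODE for the excess $e(r)=r^{-n}\|T\|(B_r)-\Theta_\bC$, the ODE gives the $(-\log r)^{-1/\gamma}$ decay, and a monotonicity-controlled Cauchy estimate then yields uniqueness with the rate $(-\log r)^{(\gamma-1)/(2\gamma)}$. The implementation differs in two places. First, the paper runs the ODE continuously and never writes $W(r)=\cA_\bC(v_r)+O(r^{\alpha_0})$ or bounds an inner ball: it uses the full-ball competitor $\bG_\bC(h_r)$ together with the coning inequality $r^n\cA_\bC(z_r)\le \tfrac{r}{n}f'(r)$ (where $f(r)=\|T\|(B_r)-\Theta_\bC r^n$) to obtain $e'(r)\ge n\eps\, e(r)^{1+\gamma}/r - Cr^{\alpha-1}$ directly, so only the trace $c_r$ on $\partial B_1$ is ever needed. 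Second, for the Cauchy step the paper does not go through $\|v_{r_k}-v_{r_{k+1}}\|_{L^2}$; it pushes $T\res(B_r\setminus B_s)$ forward by $x\mapsto x/|x|$, bounds the mass via the almost-monotonicity identity as $\big(e(r)-e(s)+Cr^{\alpha_0}\big)^{1/2}\big(\log(r/s)\big)^{1/2}$, and sums over \emph{exponentially} dyadic scales $2^{-2^k}r_0$. Your standard-dyadic scheme with weighted Cauchy--Schwarz is a legitimate alternative producing the same exponent, but it does require the graphical parametrization at every scale, whereas the radial-projection estimate is purely current-theoretic.

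The one genuine issue is the graphical parametrization. Your claim that Allard gives $T_r=\bG_\bC(v_r)$ on $B_1\setminus B_{\rho(r)}$ with $\rho(r)\to 0$ is circular as written: graphicality on that annulus requires $T_s$ close to $\bC$ for every $s\in[\rho(r)r,\, r]$, which is precisely the conclusion you are after (recall only \emph{one} blow-up is assumed to be $\bC$). The paper handles this by parametrizing only on a \emph{fixed-ratio} annulus $B_r\setminus B_{r/4}$ (Proposition~\ref{p:sph_par}, whose hypotheses are just smallness of the density drop and of the flat distance of the slice to $\bC$) and then bootstrapping: once the decay estimate is established on $[r/4,r]$, it feeds back into the hypothesis of Proposition~\ref{p:sph_par} at scale $r/4$ via Lemma~\ref{l:tg_cones}, and one iterates downward. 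Replace your $\rho(r)\sim r^{\alpha_0/n}$ device with this fixed-ratio bootstrap; since the paper's ODE needs only the trace on $\partial B_r$ and the full-ball competitor, the inner-ball control you were worried about is never actually required, and the rest of your argument then goes through.
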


\noindent Similar results for almost area-minimizers are the one of Taylor \cite{Taylor1} and of the second named author together with De Lellis and Spadaro \cite{DSS1}. However there are two additional  difficulties in our situations. First of all, our epiperimetric inequality is logarithmic and not a classical one, since the cone is not assumed to be integrable. Secondly, in both \cite{Taylor1, DSS1} the admissible blow-ups are rotations of a fixed cone, so that one can assume, through a simple compactness argument, that \eqref{e:log_epi} holds at every scale. However, we do not require this to be the case, and in fact we ask for only one of the possible blow-ups to have the required structure. 

We also stress that the combined works of Allard-Almgren and Simon (e.g \cite{AllAlm,Simon0}) prove the analogous of Theorem \ref{t:uniq} for multiplicity-one stationary varifolds. However their proofs do not apply to almost minimizers as they require a PDE to be satisfied. Moreover, our approach unifies the situations of integrability and non-integrability of the cone; this relationship is investigated in Subsection \ref{ss:integrability}. 

The following corollary is a consequence of Theorem \ref{t:uniq}, since in codimension $1$ the multiplicity-one assumption on the blow-up is always guaranteed.

\begin{corollary}[Uniqueness for $7$-dimensional hypersurfaces]\label{c:codimension1}
	Suppose that  $T\in  \bI_7(U)$ is almost area-minimizing in an open set $U\subset N$, where $N$ is a $C^2$ orientable smooth manifold of dimension $8$ with $(\bar{N}\setminus N)\cap U=\emptyset$. Then $T$ has a unique tangent cone at every point and is locally $C^{1,\log}$ diffeomorphic to it.
\end{corollary}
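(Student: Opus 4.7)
The plan is to apply Theorem \ref{t:uniq} at each singular point of $T$ and to handle regular points via standard $\eps$-regularity. First, working in local $C^2$-charts on $N$, one reduces to the ambient Euclidean space $\R^8$; almost-minimality is preserved under $C^2$-diffeomorphisms up to modifying the constants $C_0$ and $r_0$, and a $C^{1,\log}$-diffeomorphism statement transfers back through the chart.

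The main verification is that the hypotheses of Theorem \ref{t:uniq} hold at every singular point. Standard regularity theory for almost area-minimizing hypersurfaces (cf.\ \cite{DSS1,Taylor1}), together with Federer's dimension reduction, shows that $\sing(T)$ has Hausdorff dimension at most $n-7=0$ and is therefore locally finite in $U$. At any $x_0\in\sing(T)$, every blow-up is a $7$-dimensional area-minimizing hypercone $\bC\subset\R^8$; applying dimension reduction again to its link yields $\sing(\bC)=\{0\}$, so that $\bC\cap\partial B_1$ is a smooth, embedded, orientable $6$-submanifold of $\partial B_1$ (orientability following from $\bC$ being locally the reduced boundary of a Caccioppoli set). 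The same Caccioppoli structure makes the multiplicity-one assumption automatic in codimension $1$. Theorem \ref{t:uniq} then gives a unique tangent cone $\bC$ at $x_0$ together with the logarithmic decay estimates \eqref{e:roc1}--\eqref{e:roc2}, while at regular points the conclusion is immediate from $\eps$-regularity.

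To upgrade \eqref{e:roc1}--\eqref{e:roc2} to a $C^{1,\log}$-diffeomorphism statement, I would argue at dyadic scales. For $r_k=2^{-k}r_0$ the cone $\bC$ is smooth and bounded away from $0$ on the annulus $A_k:=B_{r_k}\setminus B_{r_k/2}$, so applying Allard's $\eps$-regularity for almost minimizers at scale $c\,r_k$ to points of $\supp(T)\cap A_k$ produces a smooth graphical representation of $T$ over $\bC$ whose $C^{1,\alpha}$ seminorm is controlled by the flat distance \eqref{e:roc1} at the next larger scale. Matching these representations across consecutive annuli and summing their moduli of continuity yields a single graph $\varphi:\bC\to\bC^\perp$ defined in a punctured neighborhood of $x_0$; the logarithmic rate in \eqref{e:roc1} translates into a logarithmic modulus of continuity for $\nabla\varphi$ up to the vertex, which is precisely the $C^{1,\log}$ regularity claimed. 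The main obstacle is not conceptual but bookkeeping: carefully checking that the codimension-$1$ regularity theory (dimension reduction, automatic multiplicity and orientability) applies in the almost-minimizing setting of Definition \ref{d:alm_min}, and executing the dyadic stitching without the constants degenerating as $k\to\infty$.
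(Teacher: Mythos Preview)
Your overall strategy is sound and close to the paper's, but the two arguments diverge in how they reduce to the Euclidean setting and, more importantly, in how the multiplicity-one hypothesis of Theorem~\ref{t:uniq} is secured.

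The paper does not work in local $C^2$-charts; instead it uses Nash's theorem to embed $N$ isometrically into some $\R^{n+k}$ and checks, via the nearest-point projection $\proj$, that almost-minimality in $N$ implies almost-minimality in the ambient $\R^{n+k}$. It then invokes the codimension-one decomposition lemma \cite[Corollary~3.16]{Sim} to write $T=\sum_j \partial\a{U_j}$ with each $\partial\a{U_j}$ almost area-minimizing, and applies Theorem~\ref{t:uniq} to these pieces separately. Your chart-based reduction is a perfectly legitimate alternative (and keeps the codimension equal to one rather than artificially inflating it), so on that point the two routes are genuinely different but equivalent in effect.

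Where you should be more careful is the sentence ``the same Caccioppoli structure makes the multiplicity-one assumption automatic in codimension $1$.'' This is not quite right as stated: if $T$ has density $m>1$ at $x_0$, a blow-up of $T$ as an integral current can be a multiplicity-$m$ cone, and Theorem~\ref{t:uniq} does not apply directly. What actually makes the multiplicity-one hypothesis automatic is the decomposition $T=\sum_j \partial\a{U_j}$ into boundaries of nested Caccioppoli sets, each of which inherits almost-minimality and has multiplicity-one blow-ups; the uniqueness of the tangent cone for $T$ then follows from that of each $\partial\a{U_j}$. You should insert this step explicitly. Apart from this, your discussion of the dimension reduction for the link and the dyadic upgrade to the $C^{1,\log}$ graphical parametrization is more detailed than the paper's (which simply asserts the conclusion) and is correct.
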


\subsection{Idea of the proof of Theorem \ref{t:epi_int}} Let $z$ be the function of Theorem \ref{t:epi_int}, that is, the one-homogenous extension of the trace $c$. We need to construct a competitor function $h$ whose volume is smaller than that of $z$. Our first step is a slicing lemma (Lemma \ref{l:slicing}), which says that for every $g\in C^{1,\alpha}(\bC,\bC^\perp)$ we have
\begin{equation} \label{e:sl}
\cA_\bC(rg)-\cA_\bC(rc)\leq \int_0^1 \left( \cA_\Sigma(g)-\cA_{\Sigma}(c)\right)\,r^{n-1}\,dr+C\,\,\underbrace{\int_0^1 \int_{\Sigma} |\de_rg|^2\,d\cH^{n-1}\,r^{n+1}\,dr}_{=:E_r}\,,
\end{equation}
where $\cA_{\Sigma}$ is the renormalized area on the sphere defined in \ref{e:sph_area}.
In order to gain in the first term, we build $h$ by ``flowing" $c$ along $r$ so that the area of its spherical slices is decreasing. To choose good directions for the flow we use the Jacobi operator for $\cA_\Sigma$, which we denote by $\delta^2\cA_{\Sigma}$. This is an operator with compact resolvent, therefore we can decompose $c$ as
$$
c=c_{K}+c_++c_-\,,
$$ 
where $c_K$ is the projection of $c$ on the kernel of $\delta^2\cA_\Sigma$, $c_-$ is the projection on the index of $\delta^2\cA_\Sigma$ and $c_+$ is the projection on the positive eigenspaces of $\delta^2\cA_\Sigma$. Since $\Sigma$ is stationary in the sphere (being the trace of a stationary cone), the positive directions increase the volume of $\Sigma$ at second order, and so we want to move $c$ towards zero in these directions, while the negative directions decrease it, and so we don't want to move them. In general, we cannot assume that any of $c_K, c_+, c_-$ is zero, but to better explain the argument, let us address the two opposing cases, when $c_K = 0$ and when $c_+ + c_- = 0$. 

If $c_K=0$, we define
$$
h(r,\theta):=r\eta_+(r) c_+(\theta)+rc_-(\theta)\,,
$$
for a suitably chosen function $\eta_+$, with $\eta_+'=\eps$. Then, using \eqref{e:sl}, we have
$$
\cA_\bC(h)-(1-\eps)\cA_\bC(z)\leq \left(\eps (-\lambda_++\lambda_-)+C\,\|c\|_{C^{1,\alpha}(\Sigma)}+C\,\eps^2\right)\,\|c\|^2_{H^1(\Sigma)}<0\,,
$$
where $\lambda_+>0$ is the smallest positive eigenvalue of $\delta^2\cA_{\Sigma}$ and $\lambda_-<0$ the biggest negative eigenvalue, and $\eps$ depends only on the dimension and the spectral gap, and so on $\bC$.  Note that the first term on the right hand side above comes from our choice of $\eta_+$ and the aforementioned properties of the positive and negative eigenspaces of $\delta^2\cA_{\Sigma}$. The second term on the right hand side comes from the Taylor expansion of the area, while the third bounds the radial error coming from \eqref{e:sl}.

When $\bC$ is integrable through rotations  we can take $c_K = 0$ by a simple reparametrization (using for instance the implicit function theorem as in White \cite{Wh}). In the more general setting of integrability, we can also take $c_K = 0$, but we must use a slightly more complicated Lyapunov-Schmidt reduction and the analyticity of the area functional over graphs (see Subsection \ref{ss:integrability}).

If $c=c_K$ we cannot hope to gain to second order as above. Instead, following Simon \cite{Simon0}, we consider the function $A(\mu_1,\dots,\mu_l):=\cA_\Sigma(\mu_1 \phi_1+\dots+\mu_l\phi_l)$, where $\phi_1,\dots,\phi_l$ are the Jacobi fields of $\Sigma$ and $l:=\dim \ker (\delta^2\cA_\Sigma(0))<\infty$. To decrease this quantity we let the coordinates $\mu=(\mu_1,\dots,\mu_l)$ flow according to the negative gradient flow of $A$ (that is a finite dimensional mean curvature flow) in the following way
\begin{equation}\label{e:gradient}
\begin{cases}
\displaystyle\mu'(t):=-\frac{\nabla A(\mu(t))}{|\nabla A(\mu(t))|}\\
\mu(0)=\mu^0=\mbox{coordinates of $c_K$}\,,
\end{cases}
\end{equation}
and we define
$$
h(r,\theta):=r\sum_{j=1}^\ell \mu_j(\eta(r))\phi_j(\theta)\,.
$$
Clearly the function $r\mapsto A(\mu(r))$ is decreasing, but to make it quantitative we use the \L ojasiewicz inequality to deduce that for some $\gamma\in (0,1)$ and constant $C_\bC>0$ we have
$$
A(\mu(\eta(r)))-(1-\eps)A(\mu^0)\leq -\left(C_\bC\eta(r) -\eps A(\mu^0)^{\gamma}\right)\, A(\mu^0)^{1-\gamma}\,.
$$
If we choose $\eta, \eps$ proportional to a small constant times $A(\mu^0)^{1-\gamma}$, then the gain above will be larger than the radial error caused by the flow, which according to \eqref{e:sl} is proportional to $[\eta'(r)]^2$. This in turn will imply the logarithmic epiperimetric inequality \eqref{e:log_epi}. \qed

\subsection{Organization of the paper} The paper is divided in two parts; in the first part we give the proof of Theorem \ref{t:epi_int}. In the second, we show how to use Theorem \ref{t:epi_int} to deduce Theorem \ref{t:uniq}. Finally, in the appendix (for the sake of completeness), we construct the Lyapunov-Schmidt reduction. Let us point out that the proof of Theorem \ref{t:epi_int} requires no familiarity with the language of currents. However, when we apply the epiperimetric inequality to obtain regularity, we will use some theorems and notations which are standard in the literature. For an introduction to currents and their relevant properties, see \cite{Sim}.

\section{The (Log-)epiperimetric inequality via deformations along positive directions and gradient flow}

In this section we first recall some basic notations and facts about the area functional for spherical graphs. After that we give the proof of Theorem \ref{t:epi_int}.

\subsection{Preliminaries} Let $\Sigma:=\bC\cap \de B_1$ be a smooth embedded submanifold of $\de B_1$. Given a function $u\in C^1(\Sigma,\bC^\perp)$, we define its spherical graph over $\Sigma$ and its (renormalized) volume to be respectively 
\begin{equation}\label{e:sph_area}
\bG_{\Sigma}(u):=\left\{\frac{\theta+u(\theta)}{\sqrt{1+|u(\theta)|^2}}\,:\, \theta\in \Sigma \right\}
\quad\mbox{and}\quad
\cA_\Sigma(u):=\cH^{n-1}(\bG_{\Sigma}(u))-\cH^{n-1}(\Sigma).
\end{equation}

Next we recall some properties of the Jacobi operators of the area functional. The proofs of these facts are standard and can be found in \cite{Simon0}.

\begin{lemma}[First and second variations of area]\label{l:variations} Let $\Sigma$ be the spherical cross section of a stationary cone $\bC$. Then the following properties hold.
\begin{itemize}
	\item[(i)] $\cA_{\Sigma}(0)=0=\delta\cA_\Sigma(0)$ and, for every $\zeta\in C^2(\Sigma,\bC^\perp)$,
	\begin{equation}\label{e:second_var}
	\delta^2\cA_{\Sigma}(0)[\zeta,-]:=-(\Delta_\Sigma \zeta)^\perp-\sum_{i,j=1}^{n-1}(B(\tau_i,\tau_j)\cdot \zeta)\,B(\tau_i,\tau_j) -(n-1) \zeta\,
	\end{equation}
	where $(\Delta_\Sigma \zeta)^\perp$ is the projection of $\Delta_\Sigma \zeta$ on the normal bundle of $\Sigma$ in the sphere and $B$ is the second fundamental form of $\Sigma$.
	\item[(ii)] If $g\in C^{1,\alpha}(\Sigma,\bC^\perp)$, then
	\begin{equation}\label{e:continuity}
	\left|\delta^2 \cA_\Sigma(g)[\zeta,\zeta]-\delta^2 \cA_\Sigma(0)[\zeta,\zeta]\right|\leq C\, \|g\|_{C^{1,\alpha}}\,\|\zeta\|^2_{H^1}\qquad \forall \zeta \in H^1(\Sigma, \bC^\perp)\,.
	\end{equation}
\end{itemize}	
\end{lemma}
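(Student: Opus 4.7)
\smallskip

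\noindent\textbf{Proof proposal for Lemma \ref{l:variations}.}
The plan is to treat both parts as standard computations in the geometry of submanifolds of the sphere, reducing everything to the intrinsic/extrinsic geometry of $\Sigma\subset\partial B_1$.

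For part (i), the equality $\cA_\Sigma(0)=0$ is immediate from the definition in \eqref{e:sph_area} since $\bG_\Sigma(0)=\Sigma$. To compute $\delta\cA_\Sigma(0)$ and $\delta^2\cA_\Sigma(0)$, I would parametrize points of $\bG_\Sigma(u)$ locally by the map $F_u(\theta)=(\theta+u(\theta))/\sqrt{1+|u(\theta)|^2}$ for $\theta\in\Sigma$, expand the Jacobian $J F_u$ in powers of $u\in C^2(\Sigma,\bC^\perp)$, and integrate over $\Sigma$. The first-order term produces the integral of $H_\Sigma\cdot u$, where $H_\Sigma$ is the mean curvature vector of $\Sigma$ in $\partial B_1$; the hypothesis that $\bC$ is stationary is equivalent to $\Sigma$ being a minimal submanifold of the sphere, hence $H_\Sigma\equiv 0$ and $\delta\cA_\Sigma(0)=0$. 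The second-order term yields, after a standard orthonormal-frame computation, the quadratic form
\[
\delta^2\cA_\Sigma(0)[\zeta,\zeta]
=\int_\Sigma\Bigl(|\nabla^\perp\zeta|^2-\sum_{i,j=1}^{n-1}\bigl(B(\tau_i,\tau_j)\cdot\zeta\bigr)^2-(n-1)|\zeta|^2\Bigr)\,d\cH^{n-1}.
\]
The three contributions come respectively from the differentiation of the induced metric, from the second fundamental form term (contracting two appearances of $B$), and from the ambient sectional curvature of $\partial B_1$, which is $+1$ and accounts for the $-(n-1)|\zeta|^2$ curvature term via the Simons-type identity. Integrating by parts against $\zeta$ gives precisely the operator in \eqref{e:second_var}.

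For part (ii), the plan is to view $\delta^2\cA_\Sigma(g)$ as the second variation of the area functional at the submanifold $\Sigma_g:=\bG_\Sigma(g)$, and then transport everything back to $\Sigma$ via $F_g$. In these coordinates, the coefficients of the quadratic form are smooth functions of $(g,\nabla g)$, depending on the induced metric, the second fundamental form, and the curvature of $\partial B_1$ computed at $\Sigma_g$. Since $F_g$ is a $C^{1,\alpha}$ perturbation of the identity with norm controlled by $\|g\|_{C^{1,\alpha}}$, all these quantities differ from their values on $\Sigma$ by $O(\|g\|_{C^{1,\alpha}})$. Plugging the difference into the quadratic form and estimating by Cauchy–Schwarz against $|\zeta|^2+|\nabla\zeta|^2$ gives \eqref{e:continuity}.

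The main (mild) obstacle is bookkeeping: carrying the pullback of a normal vector field $\zeta$ on $\Sigma$ to a normal field on $\Sigma_g$ requires decomposing it along the varying normal bundle and verifying that the associated transition matrices are $C^{0,\alpha}$-close to the identity, so that no hidden derivative of $g$ of order higher than one enters the estimate. Once these identifications are made, both statements reduce to direct algebraic manipulations and do not require any new analytic input.
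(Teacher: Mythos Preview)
Your proposal is correct and follows the standard route; in fact the paper does not give its own proof of this lemma at all, simply stating that ``the proofs of these facts are standard and can be found in \cite{Simon0}.'' What you sketch---expanding the Jacobian of the map $F_u$, recognizing the first-order term as the mean curvature of $\Sigma$ in $\partial B_1$ (which vanishes by stationarity of $\bC$), and reading off the Jacobi operator with its curvature term $-(n-1)\zeta$ from the ambient sectional curvature of the sphere---is exactly the computation carried out in Simon's paper, and your treatment of (ii) via smooth dependence of the second-variation coefficients on $(g,\nabla g)$ is likewise the intended argument.
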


\subsection{Slicing Lemma}

In this section we estimate the difference between the area of a general graph and a cone, by bounding the additional radial error. Although simple, this lemma is the starting point of our proof, as it suggests how to modify the trace.  

\begin{lemma}[Slicing Lemma]\label{l:slicing} For every function $g = g(r,\theta) \in C^{1,\alpha}(\bC, \bC^\perp)$ the following formula holds
	\begin{gather}
	\cA_\bC(rg)\leq \int_0^1  \cA_\Sigma\left(g(r,\cdot)\right)\,r^{n-1}\,dr+C\,\left(1+\sup_{r \in (0,1)}\|g(r,\cdot)\|_{C^{1,\alpha}(\Sigma,\bC^{\perp})}\right)\,\int_0^1 \int_{\Sigma} |\de_rg|^2\,d\cH^{n-1}\,r^{n+1}\,dr\,.
	\end{gather}
	In particular, if $g(r,\theta)=c(\theta)$, then we have
	\begin{equation}\label{e:area_cono}
	\cA_{\bC}(rc)=\frac1n \cA_\Sigma(c)\,.
	\end{equation}
\end{lemma}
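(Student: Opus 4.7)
The plan is to parameterize the spherical graph $\bG_\bC(u)$ with $u(r,\theta)=rg(r,\theta)$ directly in polar coordinates by the map
\[
\Psi(r,\theta):=r\,\Phi(r,\theta),\qquad \Phi(r,\theta):=\frac{\theta+g(r,\theta)}{\sqrt{1+|g(r,\theta)|^2}}\,.
\]
Since $g(r,\theta)\in \bC^\perp$ and $\theta\in\Sigma\subset\bC$, we have $\theta\cdot g\equiv 0$, whence $|\theta+g|^2=1+|g|^2$ and therefore $|\Phi|\equiv1$. Thus at each fixed $r$ the slice $\theta\mapsto\Phi(r,\theta)$ parameterizes exactly $\bG_\Sigma(g(r,\cdot))$, and $\Psi$ foliates $\bG_\bC(u)$ by its radial dilations. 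This is the natural parameterization: it builds in the decomposition of the graph into a ``spherical'' contribution and a ``radial'' error measured by $\de_r\Phi$.

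Next I would compute the first fundamental form of $\Psi$. Differentiating $|\Phi|^2\equiv1$ yields $\Phi\cdot\de_r\Phi=0$ and $\Phi\cdot\de_{\theta_i}\Phi=0$, so
\[
\de_r\Psi=\Phi+r\,\de_r\Phi\quad(\text{with }\Phi\perp\de_r\Phi),\qquad \de_{\theta_i}\Psi=r\,\de_{\theta_i}\Phi\,.
\]
Writing $H_{ij}:=\de_{\theta_i}\Phi\cdot\de_{\theta_j}\Phi$ (the metric on $\bG_\Sigma(g(r,\cdot))$) and $v_i:=\de_r\Phi\cdot\de_{\theta_i}\Phi$, the metric of $\Psi$ is the block matrix
\[
G=\begin{pmatrix} 1+r^2|\de_r\Phi|^2 & r^2\,v^T \\ r^2\,v & r^2\,H\end{pmatrix},
\]
and the block-determinant formula gives the clean factorization
\[
\sqrt{\det G}=r^{\,n-1}\sqrt{\det H}\,\sqrt{1+r^2\,|\de_r\Phi|_\perp^{2}},\qquad |\de_r\Phi|_\perp^2:=|\de_r\Phi|^2-v^T H^{-1} v\,,
\]
where $|\de_r\Phi|_\perp$ is the component of $\de_r\Phi$ normal to $T\bG_\Sigma(g(r,\cdot))$.

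From here the estimate follows. Using $\sqrt{1+x}\leq1+x/2$ for $x\geq 0$ and $|\de_r\Phi|_\perp^2\leq|\de_r\Phi|^2$, I would differentiate the defining formula for $\Phi$ and use $\theta\cdot g=0$ once more to obtain
\[
|\de_r\Phi|^2=\frac{|\de_r g|^2}{1+|g|^2}-\frac{(g\cdot\de_r g)^2}{(1+|g|^2)^2}\leq |\de_r g|^2\,;
\]
simultaneously $\sqrt{\det H}\leq C(1+\|g(r,\cdot)\|_{C^{1}(\Sigma,\bC^\perp)}^{\,n-1})$ by the standard area-expansion for graphs. Integrating over $(0,1)\times\Sigma$, recognizing $\int_\Sigma\sqrt{\det H}\,d\cH^{n-1}=\cH^{n-1}(\bG_\Sigma(g(r,\cdot)))$, and subtracting
\[
\cH^{n}(\bC\cap B_1)=\frac{\cH^{n-1}(\Sigma)}{n}=\int_0^1 \cH^{n-1}(\Sigma)\,r^{n-1}\,dr
\]
yields the announced inequality. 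For the special case $g(r,\theta)=c(\theta)$ one has $\de_r\Phi\equiv0$, so every inequality above becomes an equality and $\cA_\bC(rc)=\cA_\Sigma(c)\int_0^1 r^{n-1}dr=\frac1n\cA_\Sigma(c)$.

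The calculation is conceptually straightforward; the only step requiring some care is the block-determinant identity, since it is precisely the appearance of the \emph{perpendicular} component $|\de_r\Phi|_\perp^2$ (rather than the full $|\de_r\Phi|^2$) that confirms this factorization is sharp and that the radial error is controlled solely by $\de_r g$. Beyond that, all that remains is the bookkeeping required to absorb derivatives of $\Phi$ into $\|g\|_{C^{1}}$ and thus obtain the stated constant depending on $\sup_r\|g(r,\cdot)\|_{C^{1,\alpha}(\Sigma,\bC^\perp)}$.
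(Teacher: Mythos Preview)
Your argument is correct and follows essentially the same route as the paper. Both use the parameterization $\Psi=r\Phi$ with $|\Phi|\equiv1$, identify $\frac{1}{r}D_\theta\Psi=D_\theta\Phi$ as the parameterization of $\bG_\Sigma(g(r,\cdot))$, and bound the radial factor by $1+r^2|\de_r g|^2$; the only cosmetic difference is that you extract the area element via the block-determinant identity (yielding the sharper $|\de_r\Phi|_\perp^2$ before discarding it), whereas the paper writes the Jacobian as a wedge product and uses $|D_r G\wedge \tfrac{1}{r}D_\theta G|\leq |D_r G|\,|\tfrac{1}{r}D_\theta G|$ directly.
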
 

\begin{proof} Consider the function $\displaystyle G(r,\theta):=r\frac{\theta+g(r,\theta)}{\sqrt{1+g^2(r,\theta)}}$. We can compute
	$$
	\cA_\bC(rg):=\int_{0}^1\int_{\Sigma}  \left|D_rG\wedge \frac1rD_\theta G \right|\,d\theta\,r^{n-1}\,dr-\cH^{n}(\bC\cap B_1)\,.
	$$
	In particular, notice that if $g(r,\theta)=c(\theta)$, then we have $|G|=r$, so that
	$$
	1=D_r |G|=\frac{G}{|G|} \cdot D_rG
	\qquad\mbox{and}\qquad|D_r G|=1\,.
	$$
	Using again $|G|=r$ and the first equality above, which implies that $D_r G=\frac{G}{|G|}$, we deduce that
	$$
	0=D_\theta |G|=\frac{G}{|G|}\cdot D_\theta G=D_rG\cdot D_\theta G\,,
	$$
	so that $\left|D_rG\wedge D_\theta G \right|=\left|D_rG\right|\,\left|D_\theta G \right|=\left|D_\theta G \right|$. From this and the fact that $r^{-1} D_\theta G$ is independent of $r$, we deduce the well known formula
	$$
	\cA_\bC(rc)=\frac1n \int_{\Sigma}\frac1r\left|D_\theta G \right|\,d\theta-\frac1n \cH^{n-1}(\Sigma)=\frac1n\left( \cH^{n-1}(\bG_\Sigma(c))- \cH^{n-1}(\Sigma)\right) =\frac1n \cA_\Sigma(c)\,.
	$$
	Now assume $g$ has no special structure; we can estimate,
	\begin{equation}\label{e:pitagora}
		\cA_\bC(rg)\leq \int_{0}^1\int_{\Sigma}  \left|D_rG\right|\,\left| \frac1rD_\theta G \right|\,d\theta\,r^{n-1}\,dr-\int_0^1\cH^{n-1}(\Sigma)\,r^{n-1}\,dr.
	\end{equation}
	A simple computation gives
	$$
	D_r G=\frac{1}{(1+|g|^2)^{\sfrac32}}\,\left(  \theta (1+|g|^2-r\,g\cdot \de_rg)+g(1+|g|^2-r\,g\cdot \de_rg) +r\,\de_rg(1+|g|^2)  \right)
	$$
	so that, using the orthogonality between $\theta$ and $g, \de_r g$, where the second follows by the fact that $\bC$ is a cone, we deduce
	\begin{align}
	|D_r G|
		&=\sqrt{1+\frac{(r\,\de_rg)^2}{1+|g|^2}} \leq 1+r^2\, (\de_rg)^2\,.\notag
	\end{align}
	Using this bound in \eqref{e:pitagora}, together with $r^{-1}|D_\theta G|\leq C\,(1+\|g\|_{C^{1,\alpha}})$, concludes the proof.
\end{proof}

\subsection{Proof of Theorem \ref{t:epi_int}} 

 We begin by constructing the competitor function $h\in H^1(\bC,\bC^\perp)\cap C^{1,\alpha}(\Sigma,\bC^\perp)$. Let $K=\ker \delta^2\cA_{\Sigma}(0)\subset L^2(\Sigma, \bC^\perp)$, where the second variation of $\cA_{\Sigma}(0)$ is the self adjoint operator with compact resolvent defined by
$$
\delta^2\cA_{\Sigma}(0)[\zeta,\cdot]:=-(\Delta_\Sigma \zeta)^\perp-\sum_{i,j=1}^{n-1}(B(\tau_i,\tau_j)\cdot \zeta)\,B(\tau_i,\tau_j) -(n-1) \zeta\,,\quad \mbox{for every }\zeta\in C^2(\Sigma,\bC^\perp)\,.
$$  
This is a system of equations, with as many equations as the dimension of the normal bundle of $\Sigma$ in the sphere. Let $\Upsilon\in C^\omega(K,K^\perp)$ be the operator given by the Lyapunov-Schmidt reduction in Appendix \ref{a:LS}, and write the trace $c$ as
$$
c=P_Kc+P_{K^\perp}c=P_Kc+\Upsilon(P_Kc)+\left(P_{K^\perp}c-\Upsilon(P_Kc)\right)\equiv P_Kc+\Upsilon(P_Kc)+c_\Upsilon^\perp\,,
$$
where $P_K$ and $P_{K^\perp}$ are the projections respectively on $K$ and $K^\perp$.
By the spectral theory for operators with compact resolvent, we know that there exists an orthonormal basis $\{\phi_j\}_{j=1}^{\infty}$ of $H^1(\Sigma,\bC^\perp)$ and numbers $\{\lambda_j\}_{j=1}^\infty$ accumulating at $+\infty$, such that
$$
\delta^2\cA_{\Sigma}(0)[\phi_j,\cdot]=\lambda_j \phi_j\,,\qquad \mbox{for every }j\in\N\,,
$$
where each eigenvalue has finite multiplicity. In particular we set $\ell:=\dim K$ and suppose that $K$ is spanned by the eigenfunctions $\phi_1,\dots,\phi_l$. Then we can decompose, 
$$
c_\Upsilon^\perp:=\sum_{\{j,\mid \lambda_j<0\}} c_j \phi_j+\sum_{\{j\mid\lambda_j>0\}} c_j \phi_j=:c_-^\perp+c_+^\perp
\qquad\mbox{and}\qquad
P_K(c):=\sum_{\{j\mid\lambda_j=0\}}\mu_j^0\,\phi_j=\sum_{j=1}^\ell \mu^0_j\,\phi_j.
$$
We then define the competitor function $h$ as
\begin{equation}\label{e:competitor}
rh(r\,\theta):=r\left(\sum_{j=1}^\ell \mu_j(\eta(r))\phi_j(\theta)+\Upsilon\left(\sum_{j=1}^\ell\mu_j(\eta(r))\,\phi_j(\theta)\right)+c_-^\perp(\theta)+\eta_+(r)\,c_+^\perp(\theta)\right)\,,
\end{equation}
where $\mu(\eta(r)):=(\mu_1(\eta(r)),\dots,\mu_\ell(\eta(r)))$ is the vector field defined by the renormalized gradient flow
\begin{equation}\label{e:gradient_flow}
\begin{cases}
\displaystyle\mu'(t):=-\frac{\nabla A(\mu(t))}{|\nabla A(\mu(t))|}\\
\mu(0)=(\mu_1^0,\dots,\mu_l^0)=:\mu^0\,,
\end{cases}
\end{equation}
where $A(\mu):=\cA_{\Sigma}\left(\sum_{\{j\mid\lambda_j=0\}}\mu_j\,\phi_j+\Upsilon \big(\sum_{\{j\mid\lambda_j=0\}}\mu_j\,\phi_j\big)\right)$ is well known to be an analytic function from $\R^\ell$ to $\R$. If $|\nabla A(\mu(t))|=0$ we set $\mu'(t)=0$.

The two cut-off functions, $\eta_+$ and $\eta$, are chosen to be
\begin{equation}\label{e:choice_eta}
\eta_+(r):=1-(1-r)\eps
\qquad \mbox{and}\qquad
\eta(r):=\eps_A A(\mu^0)^{1-\gamma}\,C\,(1-r)\,,
\end{equation}
where $\eps,\eps_A, C$ and $\gamma$ will be chosen later in the proof, depending only on $\Sigma$, and so on $\bC$.
Notice that $h(1,\cdot)=c(\cdot)$, so the first property required of our competitor is satisfied. Also note that $h \in C^{1,\alpha}(\bC, \bC^\perp)$ as each $\phi_j \in C^{1,\alpha}(\Sigma, \bC^\perp)$ (by elliptic regularity) and $\Upsilon(\mu)\in C^{1,\alpha}(\Sigma, \bC^\perp)$ (see Lemma \ref{l:LS}).
Thus we can use Lemma \ref{l:slicing}, and estimate
\begin{align}\label{e:slice}
\cA_\bC(rh)-(1-\eps)\cA_{\bC}(rc)
	&\leq \int_0^1\big(\cA_{\Sigma}(h(r, \cdot))-(1-\eps)\cA_\Sigma(c)\big)\,r^{n-1}\,dr\\
	&+\underbrace{C\,\left(1+\sup_{r \in (0,1)} \|h(r, \cdot)\|_{C^{1,\alpha}(\Sigma,\bC^{\perp})}\right)\,\int_0^1 \int_{\Sigma} |\de_rh|^2\,d\cH^{n-1}\,r^{n+1}\,dr}_{=:E_r}\,.\notag
\end{align}
By the definition of $h$ (and \eqref{e:est_upsilon}) we have that $\sup_{r \in (0,1)} \|h(r, \cdot)\|_{C^{1,\alpha}(\Sigma,\bC^\perp)}\leq 5\|c\|^\gamma_{C^{1,\alpha}(\Sigma,\bC^\perp)} \leq 1$ (for more details see \eqref{e:mucomparable} and the discussion below) and moreover
\begin{align*}
\int_0^1 \int_{\Sigma} |\de_rh|^2\,d\cH^{n-1}\,r^{n+1}\,dr
	&\leq 2\, \int_0^1 r^{n+1}\,\Big( (\eta_+'(r))^2\|c_+^\perp\|^2+(\eta'(r))^2\|P_Kc+\Upsilon(P_Kc)\|^2  \Big)\,dr\\
	&\leq C\, \int_0^1 r^{n+1}\,\left( \eps^2\|c_\Upsilon^\perp\|^2_{H^1(\Sigma,\bC^\perp)}\,+(\eta'(r))^2 \right)\,dr\,,
\end{align*}
where in the second inequality we used \eqref{e:est_upsilon} to estimate $\|P_Kc+\Upsilon(P_Kc)\|\leq 2\|P_Kc\| < 1$. It follows that
\begin{equation}\label{e:radial_error}
|E_r|\leq  C\,\left( \eps^2 \|c_\Upsilon^\perp\|_{H^1}^2 +\eps_A^2 A(\mu^0)^{2-2\gamma}  \right)\,.
\end{equation}
For the main term in the right-hand side of \eqref{e:slice}, we split the estimate in two parts
\begin{align*}
\cA_{\Sigma}(h)-(1-\eps)\cA_\Sigma(c)
	&=\underbrace{\big(\cA_{\Sigma}(h)-\cA_{\Sigma}(\mu+\Upsilon(\mu))\big)-(1-\eps)\big(\cA_\Sigma(c)-\cA_{\Sigma}(P_Kc+\Upsilon(P_Kc))\big)}_{=:E^\perp}\\
    &\quad+\underbrace{ \cA_{\Sigma}(\mu+\Upsilon(\mu))-(1-\eps)\cA_{\Sigma}(P_Kc+\Upsilon(P_Kc))  }_{=:E^T}\,.
\end{align*} 
For the first part, denoting by $h^\perp_\Upsilon:=h-(\mu+\Upsilon(\mu))$, we have by a simple Taylor expansion
\begin{align}\label{e:eperp1}
E^\perp	
	&=\delta\cA_{\Sigma}(\mu+\Upsilon(\mu))[h^\perp_\Upsilon]+\delta^2\cA_{\Sigma}(\mu+\Upsilon(\mu)+sh^\perp_\Upsilon)[h^\perp_\Upsilon,h^\perp_\Upsilon]\notag\\
	&\quad-(1-\eps) \left( \delta\cA_{\Sigma}(\mu^0+\Upsilon(\mu^0))[c^\perp_\Upsilon]+\delta^2\cA_{\Sigma}(\mu^0+\Upsilon(\mu^0)+tc^\perp_\Upsilon)[c^\perp_\Upsilon,c^\perp_\Upsilon] \right)\notag\\
	&\leq \delta^2\cA_{\Sigma}(\mu+\Upsilon(\mu)+sh^\perp_\Upsilon)[h^\perp_\Upsilon,h^\perp_\Upsilon]-(1-\eps)\, \delta^2\cA_{\Sigma}(\mu^0+\Upsilon(\mu^0)+tc^\perp_\Upsilon)[c^\perp_\Upsilon,c^\perp_\Upsilon]\,,
\end{align}
where $s,t\in  (0,1)$ and the second inequality holds thanks to \eqref{e:LSorth} and the fact that $h^\perp_\Upsilon,c^\perp_\Upsilon\in K^\perp$. Using (ii) of Lemma \ref{l:variations}, we have that 
\begin{align*}
\left|\delta^2\cA_\Sigma(f)[\zeta,\zeta]-\delta^2\cA_\Sigma(0)[\zeta,\zeta]\right|\leq C\, \|f\|_{C^{1,\alpha}(\Sigma,\bC^{\perp})}\,\|\zeta\|^2_{H^1(\Sigma,\bC^{\perp})}\,.
\end{align*}
Using this estimate in \eqref{e:eperp1} and the fact that $|\eta|, |\eta_+| \leq 1$, we deduce
\begin{align}\label{e:eperp2}
E^\perp
	&\leq \delta^2\cA_{\Sigma}(0)[c^\perp_-+\eta_+c^\perp_+,c^\perp_-+\eta_+c^\perp_+]-(1-\eps)\delta^2\cA_{\Sigma}(0)[c^\perp_-+c^\perp_+,c^\perp_-+c^\perp_+]\notag\\
	&\quad +C\,\left(\|\mu+\Upsilon(\mu)+sh_\Upsilon^\perp\|_{C^{1,\alpha}}+\|\mu^0+\Upsilon(\mu^0)+tc_\Upsilon^\perp\|_{C^{1,\alpha}}\right)\,\|c^\perp_\Upsilon\|^2_{H^1(\Sigma,\bC^{\perp})}\notag\\
	&\leq \eps\, \delta^2\cA_{\Sigma}(0)[c^\perp_-,c^\perp_-]+\left(\eta^2_+-(1-\eps)\right) \delta^2\cA_{\Sigma}(0)[c^\perp_+,c^\perp_+]\notag\\
	&\quad +C\,\left(2\|c^\perp_\Upsilon\|_{C^{1,\alpha}}+\|\mu\|_{C^{1,\alpha}}+\|\mu^0\|_{C^{1,\alpha}}\right)\,\|c^\perp_\Upsilon\|^2_{H^1(\Sigma,\bC^{\perp})}\,.
\end{align}
Integrating \eqref{e:eperp2} in $r$ and recalling that, by \eqref{e:choice_eta}, we have $\int_0^1 (\eta^2_+(r)-(1-\eps))\,r^{n-1}\,dr\leq -\eps$, we conclude
\begin{align}\label{e:eperp3}
\int_0^1E^\perp\,r^{n-1}\,dr
	&\leq \eps \, \max_{\lambda_j<0}\lambda_j\,\|c_-^\perp\|^2_{H^1(\Sigma,\bC^{\perp})}-\eps \, \min_{\lambda_j>0}\lambda_j\,\|c_-^\perp\|^2_{H^1(\Sigma,\bC^{\perp})}\notag\\ 
	&\quad +C\,\left(\|c_\Upsilon^\perp\|_{C^{1,\alpha}} + \|\mu\|_{C^{1,\alpha}}+\|\mu^0\|_{C^{1,\alpha}}\right)\,\|c^\perp_\Upsilon\|^2_{H^1(\Sigma,\bC^{\perp})}\notag\\
	&\leq - \left(C_\bC\,\eps-C\,\left(\|c_\Upsilon^\perp\|_{C^{1,\alpha}} +\|\mu\|_{C^{1,\alpha}}+\|\mu^0\|_{C^{1,\alpha}}\right)\right)\,\|c^\perp_\Upsilon\|^2_{H^1(\Sigma,\bC^{\perp})},
\end{align}
where $C_\bC>0$ is a strictly positive constant depending only on the spectral gap between $0$ and the other eigenvalues of $\delta^2\cA_{\Sigma}(0)$, that is depending only on $\Sigma$ and so, on $\bC$. Noticing that, by definition of $\eta$, we have
\begin{gather}\label{e:mucomparable}
|\mu(\eta(r))-\mu^0|\leq \int_0^{\eta(r)}|\mu'(t)|\,dt\leq |\eta(r)|\leq C\,\eps_A\,A(\mu^0)^\gamma\notag\\
\left|\frac{d}{dr}\mu(\eta(r))\right|\leq |\mu'(\eta(r))|\,|\eta'(r)|\leq C\,\eps_A\,A(\mu^0)^\gamma\notag\,.
\end{gather}
These estimates, combined with elliptic regularity, allow us to bound $\|\mu\|_{C^{1,\alpha}} \leq C\eps_A A(\mu^0)^\gamma +  \|\mu^0\|_{C^{1,\alpha}} \leq 2\|\mu^0\|_{C^{1,\alpha}}^\gamma$ (for $\eps_A > 0$ sufficiently small but depending only on $\bC$), so that by choosing $\|c\|_{C^{1,\alpha}}$ (which is bigger than $\|\mu^0\|_{C^{1,\alpha}},\|c_\Upsilon^\perp\|_{C^{1,\alpha}}$) sufficiently small in \eqref{e:eperp3}, depending only on $C_\bC$, we conclude
\begin{equation}\label{e:eperp4}
\int_0^1E^\perp\,r^{n-1}\,dr\leq - C_\bC\,\eps\,\|c^\perp_\Upsilon\|^2_{H^1(\Sigma,\bC^{\perp})}\,.
\end{equation}

Next we estimate $E^T$. Recall that, by the \L ojasiewicz inequality for the analytic function $A$ (see \cite{Loj}), there exist a neighborhood $U$ of $0$ and constants $C,\gamma>0$  depending on $\bC$ and the dimension $n$, with $\gamma\in (0,\sfrac12]$, such that
\begin{equation}\label{e:loj}
|A(\mu)|^{1-\gamma}\leq C\, |\nabla A(\mu)|\,, \qquad \mbox{ for every }\mu\in U\,.
\end{equation}
Then, as long as $A(\mu(s))>0$ for $0<s<t$, we can estimate,
\begin{align}\label{e:mon}
A(\mu(t))-A(\mu^0)=
\int_0^t\nabla A(\mu(\tau))\cdot\mu'(\tau)\,d\tau=-\int_0^t|\nabla A(\mu(\tau))|\,d\tau\leq0\,,
\end{align}
so that the function $t\mapsto A((\mu(t))$ is non increasing, and therefore there exists a first time $t_1>0$ such that
$$
\begin{cases}
A(\mu(t))\geq \frac12 A(\mu^0)>0 & \mbox{if } 0\leq t\leq t_1\\
A(\mu(t))\leq \frac12 A(\mu^0)  & \mbox{if } t\geq t_1\,.
\end{cases}
$$
If $\eta(r) \leq t_1$ then we have,
\begin{align}\label{e:flux1}
E^T
	&= A(\mu(\eta(r)))-A(\mu^0)+\eps\,A(\mu^0) 
	\leq -\int_{0}^{\eta(r)}|\nabla A(\mu(\tau))|\,d\tau +\eps A(\mu^0)\notag \\
	&\leq-C_\bC \int_{0}^{\eta(r)}|A(\mu(\tau))|^{1-\gamma}\,d\tau+\eps A(\mu^0)
	\leq -\,C_\bC \,A(\mu(\eta(r))^{1-\gamma} \,\eta(r)+\eps A(\mu^0)\notag \\
	&\leq -\frac{C_\bC}{2^{1-\gamma}}\,|A(\mu^0)|^{1-\gamma} \,\eta(r)+\eps A(\mu^0)
	= -\left(\tilde{C}_\bC\eta(r) -\eps A(\mu^0)^{\gamma}\right)\, A(\mu^0)^{1-\gamma}
\end{align}
where in the first inequality we used \eqref{e:mon}, which holds since $A(\mu(t))>0$, and in the second inequality we used the \L ojasiewicz inequality \eqref{e:loj}. Finally, in the third inequality we use the monotonicity of $A$ and in the fourth we use $\eta(r) \leq t_1$.

\noindent If $\eta(r) > t_1$, then 
\begin{align}\label{e:flux2}
E^T
	&=A(\mu(\eta(r)))-(1-\eps)A(\mu^0) 
	< -\left(\frac{1}{2}-\eps \right)A(\mu^0)\notag\\ 
	&< -\left(C_\bC\,\eta(r)-\eps A(\mu^0)^\gamma \right)\, A(\mu^0)^{1-\gamma},
\end{align}
where the last inequality holds since $|\eta|\leq C\, \eps_A\, A(\mu^0)^{1-\gamma}<\frac12$ as long as $\mu^0$ is small enough. 

We now consider two cases:

\noindent {\bf Case 1:} $A(\mu^0)^{1/2}<\,\tau \|c^\perp\|_{H^1(\Sigma,\bC^\perp)} $, for some $\tau > 0$ small but universal (i.e. depending only on $\bC$ and $n$, but not on $c$). In this scenario, let $\eta \equiv 0$ (i.e. $\eps_A \equiv 0$), so that $E^T=\eps \,A(\mu^0)$, and combine \eqref{e:slice}, \eqref{e:radial_error} and \eqref{e:eperp4}, to deduce that 
\begin{align}\label{e:epicase1}
\cA_\bC(rh) - (1-\eps)\cA_\bC(rc)
	& \leq -C_{\bC} \eps\|c^{\perp}_\Upsilon\|_{H^1}^2 + (\eps  A(\mu^0)+\eps^2 \,\|c_\Upsilon^\perp\|_{H^1}^2)\notag\\
	&\leq -\left(C_{\bC}-\tau-\eps\right)\,\eps\|c^{\perp}_\Upsilon\|_{H^1}^2 < 0\,,
\end{align} 
for a proper choice of $\eps>0$ and $\tau>0$ small enough depending only on $n$ and $\bC$.

\smallskip

\noindent {\bf Case 2:} Otherwise, we choose $\eps = \eps_A A(\mu^0)^{1-\gamma}$ for some $\eps_A > 0$ small, depending only on $n$ and $\bC$. Using \eqref{e:flux1} and \eqref{e:flux2} we can estimate
\begin{align}\label{e:flux3}
\int_0^1 E^T r^{d-1} dr 
	&\leq -A(\mu^0)^{1-\gamma}\int_0^1  \left(C_\bC\,\eta_A(r) -\eps A(\mu^0)^\gamma \right) r^{d-1}\ dr\\
	&=-\eps_A A(\mu^0)^{2-2\gamma} \int_0^1 (C_\bC C(1-r)-A(\mu^0)^\gamma)r^{d-1}dr \leq -C_\bC \eps_A A(\mu^0)^{2-2\gamma}\,.\notag
\end{align}
Then, using \eqref{e:flux3} together with \eqref{e:slice}, \eqref{e:radial_error} and \eqref{e:eperp4}, we deduce 
\begin{align}\label{e:case21}
\cA_{\bC}(rh)-(1-\eps)\,\cA_{\bC}(rc)
	& \leq \underbrace{-C_{\bC} \eps\|c^{\perp}_\Upsilon\|_{H^1}^2}_{E^\perp} \underbrace{-C_\bC \eps_A A(\mu^0)^{2-2\gamma}}_{E^T}+\underbrace{C\,\left(\eps^2 \,\|c_\Upsilon^\perp\|_{H^1}^2+\eps_A^2\,A(\mu^0)^{2-2\gamma}\right)}_{E_r}\notag\\
	&\leq -(C_{\bC} \eps-\eps^2)\,\|c^{\perp}_\Upsilon\|_{H^1}^2- (C_\bC \eps_A +C\,\eps_A^2)\,A(\mu^0)^{2-2\gamma}<0\,,
\end{align}
since we are in the case $A(\mu^0)>0$ and by choosing  $\eps_A$ small enough depending only on $n$ and $\bC$. Moreover, since we are in the case $A(\mu^0)^{1/2}\geq \tau\,\|c^\perp\|_{H^1(\Sigma,\bC^\perp)} $, we can use Lemma \ref{l:slicing} to write
\begin{align}\label{e:case22}
\cA_{\bC}(rc)
	&=\frac1n \cA_\Sigma(c)=\frac1n \left(\cA_\Sigma(c)-\cA(P_Kc+\Upsilon(P_Kc))+\cA(P_Kc+\Upsilon(P_Kc))\right)\notag\\
	&\leq C_\bC\, \|c_\Upsilon^\perp\|_{H^1}^2+A(\mu^0)\leq \left(C_{\bC}\tau^{-1} + 1\right) A(\mu(0)),
\end{align}
where in the first inequality we used Lemma \ref{l:variations} combined with the standard Taylor expansion of the area. Finally, combining \eqref{e:case21} and \eqref{e:case22} we conclude
\begin{equation}\label{e:epicase2}
\cA_{\bC}(rh)-(1-\eps_A(\cA_{\bC}(rc))^{1-\gamma})\,\cA_{\bC}(rc)<0\,.
\end{equation}

Combining the two previous cases concludes the proof.
\qed

\subsection{The integrability case}\label{ss:integrability} To finish the proof of the epiperimetric inequality for integrable cones we need the following lemma, which is based on the analyticity of $A$ and whose proof can be found also in \cite{AS}. 

\begin{lemma}[Constant area on the kernel]\label{l:intimpliescost}
	A cone $\bC$ is integrable if and only if $A(\mu)=A(0)=0$ in a neighborhood of $0$.
\end{lemma}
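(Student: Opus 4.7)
The strategy is to use the analyticity of $A$ together with the Lyapunov--Schmidt correspondence (Lemma~\ref{l:LS}) between critical points of $\cA_\Sigma$ near $0$ and critical points of $A$ on $K$. I will treat both implications separately.

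\textbf{Forward direction ($\bC$ integrable $\Rightarrow$ $A\equiv 0$ near $0$).} Fix any 1-homogeneous Jacobi field $\phi\in K$ and let $\Phi_t$ be the family of diffeomorphisms provided by integrability. For $|t|$ small, parametrize $\Phi_t(\bC)\cap\de B_1$ as a spherical graph $\bG_\Sigma(g(t,\cdot))$ with $g(0,\cdot)=0$. Since $\Phi_t(\bC)$ is a minimal cone with isolated singularity, $\bG_\Sigma(g(t,\cdot))$ is stationary on $\de B_1$, hence
$$\frac{d}{dt}\cA_\Sigma(g(t,\cdot))=\delta\cA_\Sigma(g(t,\cdot))[\de_t g(t,\cdot)]=0,$$
so $\cA_\Sigma(g(t,\cdot))\equiv 0$. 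By the uniqueness part of Lyapunov--Schmidt (Lemma~\ref{l:LS}), the critical point $g(t,\cdot)$ factors as $\mu(t)+\Upsilon(\mu(t))$ for a smooth curve $\mu(t)=P_K g(t,\cdot)\in K$ with $\mu(0)=0$ and $\dot\mu(0)=P_K\phi=\phi$. Consequently $A(\mu(t))\equiv 0$.

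Next I would upgrade this to an infinite-order vanishing statement. Differentiating $A(\mu(t))\equiv 0$ at $t=0$ using Fa\`a di Bruno's formula and inducting on $k$: assume $\nabla^{j}A(0)=0$ for all $1\le j<k$ (the base case $k=1$ is immediate, since $\nabla A(0)=0$ always). Then every term in the $k$-th derivative vanishes except the leading one, giving $\nabla^k A(0)[\phi,\ldots,\phi]=0$. Since $\phi\in K$ was arbitrary and $\nabla^k A(0)$ is a symmetric $k$-linear form on $K$, polarization yields $\nabla^k A(0)=0$. Combined with $A(0)=0$ and the analyticity of $A$ (see the discussion after \eqref{e:gradient_flow}), this forces $A\equiv 0$ in a neighborhood of $0$.

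\textbf{Backward direction ($A\equiv 0$ near $0$ $\Rightarrow$ $\bC$ integrable).} Fix any $\phi\in K$ and set $\mu(t):=t\phi$ for $|t|$ small. Since $A$ is constant in a neighborhood of $0$, $\nabla A(\mu(t))\equiv 0$, so by Lemma~\ref{l:LS} the function $u_t:=t\phi+\Upsilon(t\phi)$ is a critical point of $\cA_\Sigma$; equivalently $\bG_\Sigma(u_t)$ is stationary in $\de B_1$ and its $1$-homogeneous extension $\bC_t$ is a minimal cone. A direct computation with the defining equation for $\Upsilon$ shows $d\Upsilon(0)=0$ (since $\delta^2\cA_\Sigma(0)$ vanishes on $K$), so $\Upsilon(t\phi)=o(t)$ and the family $\bC_t$ has initial normal velocity $\phi$ along $\bC$. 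Smallness of $u_t$ in $C^{1,\alpha}$ ensures $\sing(\bC_t)=\{0\}$ for $|t|$ small. Finally, to produce the flow $\Phi_t$ in the definition of integrability, extend $\phi$ to a smooth vector field $X$ on a tubular neighborhood of $\bC$ so that the normal graph parametrization $\bC_t=\{x+v_t(x):x\in\bC\}$ coincides with the time-$t$ image of $\bC$; this can be done by prescribing $X$ on $\bC$ and extending radially in the normal bundle, and gives the required $\Phi_t$.

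\textbf{Main obstacle.} The technical heart is the Fa\`a di Bruno induction in the forward direction: one must verify that at each order $k$ all non-leading terms involve only $\nabla^j A(0)$ with $j<k$ (which vanish by induction), so that the polarization step applies cleanly. An equivalent, slightly more geometric viewpoint is to observe that the integrability hypothesis gives a smooth curve in the analytic set $A^{-1}(0)$ with every prescribed tangent $\phi\in K$, so the tangent cone to $A^{-1}(0)$ at $0$ is all of $K$; since $A^{-1}(0)$ is an analytic subvariety of the $\ell$-dimensional space $K$, this forces $A^{-1}(0)$ to contain a neighborhood of $0$. The backward direction is essentially bookkeeping given Lemma~\ref{l:LS}.
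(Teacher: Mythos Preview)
Your argument is correct, and the backward direction is essentially the paper's (the paper simply invokes the reformulation $\Psi_s=s\mu+\Upsilon(s\mu)$ together with \eqref{e:est_upsilon}, exactly as you do). For the forward implication you take a genuinely different route. The paper argues by contradiction: if $A\not\equiv 0$, write $A=A_p+A_R$ with $A_p$ the lowest-order nonzero homogeneous part, choose $\phi\in K$ with $\nabla A_p(\phi)\neq 0$, and use the curve of critical points $\Psi_s=\phi_s+\Upsilon(\phi_s)$ with $\phi_s/s\to\phi$; since $\nabla A(\phi_s)=0$, expanding $\nabla A(\phi_s)=s^{p-1}\nabla A_p(\phi)+o(s^{p-1})$ and dividing by $s^{p-1}$ yields a contradiction. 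Your Fa\`a di Bruno induction instead kills $\nabla^kA(0)$ one order at a time via polarization. The paper's approach is marginally shorter and needs only $\phi_s/s\to\phi$, i.e.\ differentiability of the curve at $t=0$, whereas yours requires $\mu(t)$ to be $C^\infty$---which is available here because $\Phi_t$ solves an autonomous ODE, but would fail under the weaker reformulation \eqref{e:int2} that the paper uses. Your argument also exploits the \emph{value} identity $A(\mu(t))\equiv 0$ (derived from constancy of area along the family), while the paper works directly from \emph{criticality} $\nabla A(\phi_s)=0$. Finally, the ``alternative viewpoint'' you sketch at the end---every direction in $K$ is tangent to the analytic set $A^{-1}(0)$, hence the lowest homogeneous part of $A$ vanishes identically---is precisely the paper's argument recast geometrically.
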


Using this lemma  it is immediate to see that if $\bC$ is integrable then we always fall in Case 1 of the proof of Theorem \ref{t:epi_int}, so that we have \eqref{e:log_epi} with $\gamma=0$.  

\begin{proof}[Proof of Lemma \ref{l:intimpliescost}] The integrability condition \eqref{e:int} is equivalent to
	\begin{equation}\label{e:int2}
	\forall \phi\in \ker  \delta^2\cA_{\Sigma}(0)\quad \exists (\Psi_s)_{s\in(0,1)}\subset C^2(\Sigma, \bC^\perp)\quad\mbox{s.t. }
	\begin{cases}
	\lim_{s\to 0}\Psi_s=0\\
	\delta\cA_{\Sigma}(\Psi_s)=0\quad \mbox{for }s\in(0,1)\\
	\displaystyle\frac{d}{ds}\Big|_{s=0}\Psi_s=\lim_{s\to 0}\frac{\Psi_s}{s}=\phi \,.
	\end{cases}
	\end{equation}  

Assume \eqref{e:int2} holds, and recall the definition $A(\mu) = \cA_{\Sigma}(\mu + \Upsilon(\mu))$. If $A \equiv 0$ in a neighborhood of zero then we are done. Otherwise we can write $A(\mu) = A_p(\mu) + A_R(\mu)$ where, $A_p \not\equiv 0$,  $A_p(\lambda \mu) = \lambda^p A(\mu)$ for $\lambda > 0$ and $A_R(\mu)$ is the sum of homogeneous polynomials of degrees $\geq p+1$. Note there exists some $\phi \in \ker \delta^2 \cA_{\Sigma}(0)$ such that $\nabla A_p(\phi) \neq 0$; let $\Psi_s$ be the one-parameter family of critical points that is generated by $\phi$ (as in \eqref{e:int2}). 

As $\Psi_s$ is a critical point, Lemma \ref{l:LS} allows us to write $\Psi_s = \phi_s + \Upsilon(\phi_s)$ where $\phi_s \in K$ and $\frac{\phi_s}{s} \rightarrow \phi$ as $s\downarrow 0$. Computing $$0 = \delta \mathcal A_{\Sigma}(\Psi_s) = \nabla A(\phi_s) = \nabla A_p(\phi_s) + \nabla A_R(\phi_s) = s^{p-1}\nabla A(\phi) + o(s^{p-1}).$$ Divide the above by $s^{p-1}$ and let $s\downarrow 0$ to obtain a contradiction to $\nabla A_p(\phi) \neq 0$. 

In the other direction assume that $A \equiv 0$ in a neighborhood of $0$. This implies that $\nabla A \equiv 0$ in a (perhaps slightly smaller) neighborhood of $0$. Therefore, for any $\mu \in \ker \delta^2 \cA_{\Sigma}(0)$, letting $\Psi_s = s\mu + \Upsilon(s\mu)$ and recalling \eqref{e:est_upsilon} establishes \eqref{e:int2}.
\end{proof} 

\section{Almost area minimizing currents  and applications}

In this section we apply the (log-)epiperimetric inequality of Theorem \ref{t:epi_int} to deduce Theorem \ref{t:uniq}. As mentioned in the introduction, for the classical epiperimetric inequality this has been done in \cite{DSS1} by De Lellis, Spadaro and the second author. Here, however, the strategy is slightly different since we do not know that every blowup is of the same type (i.e. our uniqueness result not only determines a rotation, but actually prevents the formation of additional singularities).

\subsection{Technical preliminaries}

%

We start by recalling the following well-known proposition, whose proof can be found in \cite[Proposition 2.1]{DSS1}.

\begin{proposition}[Almost Monotonicity {\cite[Proposition 2.1]{DSS1}}]\label{p:AMO}
	Let $T\in {\bf I}_n (\R^{n+k})$ be an almost minimizer and $x\in \supp (T)\setminus \supp (\partial T)$. There are constants $C, \bar{r}, \alpha_0>0$ such that 
	\begin{equation}\label{e:monotonicity formula}
	\int_{B_r (x)\setminus B_s(x)} \frac{|(z-x)^\perp|^2}{|z-x|^{n+2}} d \|T\|(z)
	\leq C\left( \frac{\|T\|(B_r(x))}{\omega_n\,r^n}
	-\frac{\|T\|(B_s(x))}{\omega_n\,s^n} +\,r^{\alpha_0} \right),
	\end{equation}
	for all $0<s<r<\bar{r}$ (in \eqref{e:monotonicity formula} $(z-x)^\perp$ denotes the projection of the vector 
	$z-x$ on the orthogonal complement of the approximate tangent to $T$ at $z$).
	
	 In particular, the function $\displaystyle{r\mapsto  \frac{\|T\|(B_r(x))}{\omega_n\,r^n}+r^{\alpha_0}}$ is nondecreasing.
\end{proposition}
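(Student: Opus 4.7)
The plan is to adapt the classical monotonicity-via-cone-comparison argument to the almost-minimizer setting, inserting the error $C_0 r^{n+\alpha_0}$ at the one step where minimality is used. Assume without loss of generality that $x=0$ and that $\bar r>0$ is so small that $B_{\bar r}\cap\supp(\partial T)=\emptyset$. For a.e.\ $r\in(0,\bar r)$, the slice $\langle T,|z|,r\rangle$ is a rectifiable $(n-1)$-cycle on $\partial B_r$, so the cone $C_r:=0\cone\langle T,|z|,r\rangle$ satisfies $\de C_r=\langle T,|z|,r\rangle=\de(T\res B_r)$; hence $(T\res B_r)-C_r$ is a cycle in $\bar B_r$, and by the deformation theorem it equals $\de S$ for some integral $(n+1)$-current $S$ supported in $\bar B_r$. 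Plugging this $S$ into \eqref{e:almost minimizer2} and using the standard cone-mass formula $\mass(C_r)=\tfrac{r}{n}\mass(\langle T,|z|,r\rangle)$ would produce the \emph{almost-cone inequality}
$$\|T\|(B_r)\le\tfrac{r}{n}\mass(\langle T,|z|,r\rangle)+C_0 r^{n+\alpha_0}.$$

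Next I would derive the coarea identity underlying the Allard-type monotonicity. Decomposing the position vector $z=z^T+z^\perp$ along the approximate tangent to $T$, so that $|\nabla^{\supp T}|z||=|z^T|/|z|$, the coarea formula applied to $f(z)=|z|$ together with $|z|^2-|z^T|^2=|z^\perp|^2$ yields the identity
$$\tfrac{d}{dr}\|T\|(B_r)=\mass(\langle T,|z|,r\rangle)+\int_{\supp T\cap\partial B_r}\frac{|z^\perp|^2}{|z^T|(|z|+|z^T|)}\,\Theta\,d\cH^{n-1}.$$
Combining this with the almost-cone inequality and computing the derivative of $\|T\|(B_r)/(\omega_n r^n)$ would give a differential inequality whose negative part is bounded by $(nC_0/\omega_n)\,r^{\alpha_0-1}$. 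Adding the antiderivative $\tfrac{nC_0}{\omega_n\alpha_0}\,r^{\alpha_0}$ then makes $\tfrac{\|T\|(B_r)}{\omega_n r^n}+\tfrac{nC_0}{\omega_n\alpha_0}r^{\alpha_0}$ nondecreasing, which is the monotonicity assertion (up to an overall rescaling of $C_0$ to obtain coefficient $1$).

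Finally, to obtain \eqref{e:monotonicity formula}, I would integrate the above differential inequality from $s$ to $r$. On $\partial B_\rho$ we have $|z|+|z^T|\le 2\rho$, so the spherical integrand $\frac{|z^\perp|^2\,\Theta}{|z^T|(|z|+|z^T|)}$ is bounded below by $\frac{|z^\perp|^2\,\Theta}{2\rho\,|z^T|}$; applying coarea once more (the Jacobian factor $|z^T|/\rho$ cancels the extra $|z^T|$) reconstitutes this as a constant multiple of $\int_{B_r\setminus B_s}|z^\perp|^2/|z|^{n+2}\,d\|T\|$, giving \eqref{e:monotonicity formula} after absorbing $s^{\alpha_0}$ into the $r^{\alpha_0}$ term. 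The only genuine technical points I would pay attention to are the construction of the integral filling $S$ supported in $\bar B_r$ (a standard application of the deformation theorem, using that the difference is a boundary in $\R^{n+k}$ because it is a compactly supported cycle) and the a.e.\ differentiability of $r\mapsto\|T\|(B_r)$, which is automatic from the monotonicity of this set function. Beyond these standard issues, the argument is purely the classical Allard-style calculation with a tracked error term, matching \cite[Prop.~2.1]{DSS1}.
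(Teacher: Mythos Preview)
The paper does not actually prove this proposition: it simply states it and refers to \cite[Proposition~2.1]{DSS1} for the proof. Your outline is precisely the standard cone-comparison argument (compare with the cone over the slice, insert the almost-minimality error, and differentiate the density), which is indeed the method in that reference; so your approach matches what the paper is invoking.

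One small technical remark: to apply Definition~\ref{d:alm_min} you need the filling $S$ to be supported in $B_r(x)$, and the deformation theorem alone does not give you control on the support. The clean way to get $S$ is simply to take the cone $S = 0\cone\big((T\res B_r)-C_r\big)$, which is supported in $\bar B_r$ since the cycle is; one then either applies the almost-minimality at a slightly larger radius or notes that the inequality extends to the closed ball by approximation. With that adjustment your sketch is correct.
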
 

 Using \eqref{e:monotonicity formula} together with the almost-minimizing property, it is easy to see that the same blow-up analysis holds for almost-minimizing and minimizing currents. That is, we can consider the blow-up sequence of $T$ at $x$ defined by  $T_{x,r}:=(\iota_{x,r})_\sharp T$, where the map $\iota_{x,r}$ is given by $\R^{n+k} \ni y \mapsto \frac{y-x}{r}\in \R^{n+k}$. Recall that an area-minimizing cone $S$ is an integral area-minimizing current such that $(\iota_{0,r})_\sharp S = S$ for every $r>0$. Then, by the almost monotonicity of $\|T_{x,r}\|$,  $T_{x,r}\to S$ up to subsequences, with $S$ an area minimizing cone. Furthermore, by the almost minimality of $T$, the convergence is strong, i.e. their difference goes to zero in the flat norm, the support of $T_{x,r}$ converges to the support of $S$ in the Hausdorff distance and the mass of $T_{x,r}$ converges to that of $S$.
 
 In what follows, we continue to denote by $\bC$ an arbitrary multiplicity-one area minimizing cone; where we slightly abuse notation  and identify the cone with its support. Moreover, $T$ will be almost area minimizing with parameters $r_0, \alpha_0$ (see Definition \ref{d:alm_min}) and $T_r:=T_{0,r}$. Finally, we set
$$
\Theta_M(T,x):=\lim_{r\to 0}\frac{\|T\|(B_r(x))}{\omega_n\,r^n}
\qquad\mbox{and}\qquad
\Theta_\bC:=\|\bC\|(B_1)=\frac{\|\bC\|(B_r)}{\omega_n\,r^n}\,.
$$

 We first prove a standard parametrization lemma over a multiplicity-$1$ cone. 

\begin{proposition}[Spherical parametization from a cone]\label{p:sph_par}
	Let $\tau,\eps\in(0,\sfrac14)$, $\bC$ be a multiplicity-$1$ area minimizing cone, and $T\in \bI_n$ be an almost area minimizing current with $\Theta_M(T,0)=\Theta_{\bC}$. There are constants $\delta_1, \eta, r_1 > 0$, (which depend on $\tau, \eps$, the almost-minimizing parameters $C, \alpha_0, r_0$ and the dimension and co-dimension $n, k$) such that if $r<r_1$, and
	\begin{equation}\label{e:par_hyp}
	\frac{\|T\|( B_{4r})}{{(4r)}^n\omega_n}-\Theta_M(0)\leq \eta
	\quad\mbox{and}\quad
	\cF\big(\de((T_{2r}-\bC)\res B_{1})\big)\leq\eta\,,
	\end{equation}
	then there exists $u\in C^{1,\alpha}(\bC \cap B_{r}\setminus B_{\tau r} ,\bC^\perp)$ such that
	\begin{gather}
	T \res(B_{r}\setminus B_{\tau r})=\bG_\bC(u)
	\qquad\mbox{and}\qquad
	\|u\|_{C^{1,\alpha}}\leq \eps\label{e:par_2}\,.
	\end{gather}
\end{proposition}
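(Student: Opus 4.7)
The plan is to argue by contradiction and compactness, reducing to the classical Allard-type $\varepsilon$-regularity theorem for almost-area minimizing currents away from an isolated singularity. Suppose the proposition fails: there exist sequences $\eta_j \to 0^+$, $r_j \to 0^+$ and almost-minimizers $T^{(j)}$ with $\Theta_M(T^{(j)}, 0) = \Theta_{\bC}$, each satisfying \eqref{e:par_hyp} at scale $r_j$ with constant $\eta_j$, but for which no parametrization \eqref{e:par_2} exists. Consider the rescaled currents $S_j := (\iota_{0, 2r_j})_\sharp T^{(j)}$; each $S_j$ is almost area-minimizing in an enlarging ball with rescaled constants $\widetilde{C}_0 = C_{0} (2r_j)^{\alpha_0} \to 0$, so the minimality defect vanishes in the limit.

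Next I would establish the compactness. The mass hypothesis in \eqref{e:par_hyp}, combined with the almost-monotonicity of Proposition \ref{p:AMO}, yields uniform mass bounds and forces $\|S_j\|(B_1) \to \omega_n \Theta_{\bC} = \|\bC\|(B_1)$; the flat-norm hypothesis on the boundary forces $\partial(S_j \res B_1) \to \partial(\bC \res B_1)$ in flat norm. Standard integral current compactness then yields $S_j \to S_\infty$ (along a subsequence) in flat norm, with $S_\infty$ area-minimizing in $B_1$, having density $\Theta_{\bC}$ at $0$, matching boundary, and matching mass. The rigidity case of the monotonicity formula (recovered in the limit since $\widetilde{C}_0\to 0$) forces $S_\infty$ to be a cone, and coincidence of boundary, density and mass with $\bC$ then forces $S_\infty = \bC$.

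Finally I would invoke Allard's $\varepsilon$-regularity theorem for almost-minimizers (as used in \cite{DSS1}) at each point $p \in \bC \cap (\overline{B}_{1/2} \setminus B_{\tau/2})$, where $\bC$ is smooth with well-defined tangent plane $T_p\bC$. The flat and varifold convergence $S_j \to \bC$ combined with $\widetilde{C}_0 \to 0$ gives small $L^2$ tilt excess with respect to $T_p\bC$ on a ball $B_{\rho}(p)$, with $\rho$ depending only on $\dist(p,0)$ and the geometry of $\Sigma$. Allard's theorem then produces a $C^{1,\alpha}$ graph representation of $S_j$ over $T_p\bC$ with vanishing $C^{1,\alpha}$ norm. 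A Vitali-type covering of the compact annulus, together with uniqueness of the graph representation on overlaps, patches these into a global $C^{1,\alpha}$ function $v_j : \bC \cap (B_{1/2} \setminus B_{\tau/2}) \to \bC^\perp$ with $\|v_j\|_{C^{1,\alpha}} \to 0$ and $S_j \res (B_{1/2}\setminus B_{\tau/2}) = \bG_\bC(v_j)$. Unscaling via $u^{(j)}(x) := 2r_j\,v_j(x/(2r_j))$ contradicts the assumed failure of \eqref{e:par_2} at scale $r_j$ (up to a harmless adjustment of $\tau$ and $r_1$).

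The main technical obstacle is securing the uniform applicability of Allard's theorem across the annulus. This hinges on (i) the smoothness of $\Sigma = \bC \cap \partial B_1$, which provides uniform bounds on the second fundamental form of $\bC$ on any compact annulus bounded away from $0$, hence uniform threshold constants in Allard's theorem; and (ii) the favorable scaling $\widetilde{C}_0 = C_0 (2r_j)^{\alpha_0} \to 0$ of the almost-minimality defect, which guarantees that the almost-minimizing constant is below Allard's threshold for all $j$ large. Both points are standard, but must be invoked with care to ensure that the final constants $\delta_1, \eta, r_1$ depend only on the listed data.
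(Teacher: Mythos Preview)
Your proposal is correct and follows essentially the same contradiction--compactness strategy as the paper: rescale, extract a limit, identify the limit as the cone $\bC$ via monotonicity rigidity and the boundary hypothesis, then invoke Allard's theorem on the smooth annulus to contradict the assumed failure of graphicality. The paper is slightly more explicit about upgrading flat convergence to mass convergence (using almost minimality against the limit as competitor), while you are more explicit about the local-to-global patching in the Allard step; these are complementary emphases on the same argument.
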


\begin{proof}
	Arguing by contradiction, we assume there exist sequences of almost area minimizing currents $(T^k)_k$, all with the same constants $r_0,C,\alpha_0>0$, and radii $(r_k)_k$, with $r_k\to 0$, such that, if we consider $R_k:=(T^k)_{r_k}$, then
	\begin{equation}\label{e:cont}
	\frac{\|R_k\|(B_4)}{4^n\omega_n}-\Theta_\bC\leq \frac 1k
		\quad\mbox{and}\quad
		\cF(\de((R_k-\bC)\res B_2))\leq\frac1k\,.
	\end{equation}
	Notice that, by the first inequality above, we have a uniform bound for $\|R_k\| (B_4)$, so that up to subsequences, $R_k\to V$ in $B_4$. By the same uniform bound and the usual slicing lemma, passing to a subsequence there is a radius $\rho_0\in ]2, 4[$ such $\mass (\partial ((R_k-V)\res B_{\rho_0}))$ is uniformly bounded.
	On the other hand $R_k -V$ is converging to $0$ in the sense of currents and hence, by \cite[Theorem 31.2]{Sim}, $\mathcal{F} ((R_k-V)\res B_{\rho_0})\to 0$.
	This means, for all $\rho \leq \rho_0$, that there are integral currents $H_k, G_k$ (depending on $\rho$) with $\mass (H_k)+\mass (G_k)\to 0$ such that
	\[
	(R_k-V)\res B_{\rho} = \partial H_k + G_k\, .
	\]
	Taking the boundary of the latter identity we conclude that $\partial G_k = \partial ((R_k-V)\res B_\rho)$. Now, rescaling the almost minimality property of $T_k$,
	we conclude that
	\[
	\|R_k\| (B_\rho) \leq \|V\| (B_\rho) + \mass (G_k) + C \rho^{\alpha_0} r_k^{\alpha_0}\, .
	\]
	Since $(\mass (G_k) + r_k)\downarrow 0$, we infer 
	\[
	\limsup_{k\to\infty} \|R_k\| (B_\rho) \leq \|V\| (B_\rho)\, .
	\]
	On the other hand, $R_k\to V$ in $B_1$, so we also have
	\[
	\|V\| (B_\rho) \leq \liminf_{k\to\infty} \|R_k\| (B_\rho)\, .
	\]
	Using the almost monotonicity identity \eqref{e:monotonicity formula} and passing to the limit in $k$, we conclude by a standard argument that $V\res B_{2}$ is a cone.
    Passing to the limit in the second inequality of \eqref{e:cont} we get $\de(V\res B_2)=\de(\bC\res B_2)$. Since both $V$ and $\bC$ are integral cones, we deduce that $V=\bC$.  Finally, since $\bC$ has multiplicity $1$ and is smooth away from $0$, and $R_k$ converges to $\bC$ by Allard's theorem for almost area minimizing currents (see for instance \cite{SS}) we get a contradiction.
\end{proof}

Before we can prove Theorem \ref{t:uniq} we need to estimate the difference between tangent cones coming from comparable scales.

\begin{lemma}[Tangent cones at comparable scales]\label{l:tg_cones}
	Let $T$ be an almost area minimizing integral current. Then for all $\eps_2>0$ there exists $\delta_2=\delta_2(\eps_2, C, \alpha_0, r_0)>0$ such that for all $0<2r<\delta_2$ and all $\rho\in [r,2r]$ we have
	\begin{equation}\label{e:comp_scales}
	\cF((T_\rho-T_r)\res \de B_1)<\eps_2\,. 	
	\end{equation}
\end{lemma}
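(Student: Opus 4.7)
The plan is to exhibit an explicit $n$-dimensional ``cobordism'' $W$ between $T_r\res\de B_1$ and $T_\rho\res\de B_1$ via the radial projection $\pi\colon\R^{n+k}\setminus\{0\}\to\de B_1$, $\pi(x):=x/|x|$, and then bound $\mass(W)$ by the almost-monotonicity formula of Proposition \ref{p:AMO}.

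First I would set $W:=-\pi_\sharp\bigl(T\res(B_\rho\setminus B_r)\bigr)$, which is a well-defined integer rectifiable $n$-current supported in $\de B_1$ since $\pi$ is Lipschitz on the closed annulus. Using that the boundary commutes with pushforward and the standard boundary formula for restrictions to sublevel sets of $d(x):=|x|$, together with $\de T\equiv 0$ in $B_{2r}$ (which holds for $r$ small, since $0\notin\supp\de T$), I get
$$\de W=\pi_\sharp\langle T,d,\rho\rangle-\pi_\sharp\langle T,d,r\rangle.$$
The key identification is $\pi_\sharp\langle T,d,s\rangle=T_s\res\de B_1$ for $s\in\{r,\rho\}$: this holds because $\pi$ and the dilation $\iota_{0,s}$ agree on $\de B_s$, and then the slicing formula under the diffeomorphism $\iota_{0,s}$ combined with the scaling invariance $\langle T,\lambda f,\lambda t\rangle=\langle T,f,t\rangle$ gives $(\iota_{0,s})_\sharp\langle T,d,s\rangle=\langle T_s,s\,d,s\rangle=\langle T_s,d,1\rangle$. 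Taking $Z=0$ in the definition of the flat norm therefore yields $\cF((T_\rho-T_r)\res\de B_1)\leq\mass(W)$.

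Next I would bound $\mass(W)$ via the Jacobian of $\pi$. A direct computation in an orthonormal frame $e_1,\dots,e_n$ of the approximate tangent space of $T$ at $z$ chosen so that $z/|z|$ has trivial tangential component along $e_2,\dots,e_n$ yields $|\wedge^n d\pi_z(\vec T(z))|=|z^\perp|/|z|^{n+1}$, where $z^\perp$ is the component of $z$ orthogonal to the tangent plane (the same quantity entering Proposition \ref{p:AMO}). Hence, by the area formula and Cauchy--Schwarz,
$$\mass(W)\leq\int_{B_\rho\setminus B_r}\frac{|z^\perp|}{|z|^{n+1}}\,d\|T\|\leq \Bigl(\int_{B_\rho\setminus B_r}\frac{|z^\perp|^2}{|z|^{n+2}}\,d\|T\|\Bigr)^{\!1/2}\Bigl(\int_{B_\rho\setminus B_r}|z|^{-n}\,d\|T\|\Bigr)^{\!1/2}.$$
The second factor is bounded by $r^{-n}\|T\|(B_{2r})\leq C$ by the upper density bound that follows from almost-monotonicity. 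For the first factor, Proposition \ref{p:AMO} gives
$$\int_{B_\rho\setminus B_r}\frac{|z^\perp|^2}{|z|^{n+2}}\,d\|T\|\leq C\bigl(Q(\rho)-Q(r)+\rho^{\alpha_0}\bigr),$$
where $Q(s):=\|T\|(B_s)/(\omega_n s^n)+s^{\alpha_0}$ is nondecreasing in $s$ and converges to $\Theta_M(T,0)$ as $s\downarrow 0$, hence is Cauchy at $0$. Thus, given $\eps_2>0$, choosing $\delta_2=\delta_2(\eps_2,C,\alpha_0,r_0)$ sufficiently small makes the right-hand side above smaller than $\eps_2^2/C$ uniformly for all $0<r\leq\rho\leq 2r<\delta_2$, and combining the estimates concludes the proof.

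The only delicate step is the bookkeeping in the identification $\pi_\sharp\langle T,d,s\rangle=T_s\res\de B_1$ and the direct Jacobian computation for $\pi$ restricted to $T_zT$; neither requires new ideas. The entire quantitative content of the lemma comes from the Cauchy property at $0$ of the almost-monotone quantity $Q$, so no geometric input beyond Proposition \ref{p:AMO} is needed.
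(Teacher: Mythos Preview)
Your proof is correct, but it takes a genuinely different route from the paper's. The paper argues by contradiction and compactness: assuming a sequence of bad radii $r_n,\rho_n\downarrow 0$ with $\rho_n/r_n\in[\sfrac12,1]$, it passes to a subsequence so that $r_n/\rho_n\to L\in[1,2]$, identifies the limit of $T_{\rho_n}$ with a tangent cone $V$, and then uses the cone property $(\iota_L)_\sharp V=V$ to conclude that $T_{r_n}$ converges to $V$ as well, contradicting $\cF((T_{\rho_n}-T_{r_n})\res\de B_1)\geq\eps_2$.

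Your direct argument---constructing an explicit filling $W=-\pi_\sharp(T\res(B_\rho\setminus B_r))$ and bounding $\mass(W)$ via the almost-monotonicity formula---is precisely the computation the paper carries out later, in Step~2 of the proof of Theorem~\ref{t:uniq} (there the radial projection is called $F$), where a quantitative decay rate is actually needed. So you are anticipating that technique and applying it one step earlier. Your version buys an explicit modulus of continuity for $\cF((T_\rho-T_r)\res\de B_1)$ in terms of the oscillation of the density ratio plus $r^{\alpha_0}$, whereas the paper's compactness proof only yields the existence of some $\delta_2$ with no rate. The paper's approach is softer and avoids the Jacobian bookkeeping at this stage; yours makes transparent, as you say, that no geometric input beyond Proposition~\ref{p:AMO} is required.
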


\begin{proof}
	We argue by contradiction. Assume there are sequences $r_n\downarrow0$ and $\rho_n\downarrow0$, with $\rho_n\in[\sfrac{r_n}2,r_n]$, and such that
	$$
	\cF((T_{\rho_n}-T_{r_n})\res \de B_1)\geq\eps_2\,.
	$$
	As $1\leq \frac{r_n}{\rho_n}\leq 2$ for every $n\in  \N$, we can assume (passing to subsequences) $0< L = \lim_n\frac{r_n}{\rho_n} <\infty$. We then compute
	$$
	\cF\bigl((V-\lim_{n\to\infty}T_{r_n})\res \de B_1\bigr)=\cF\bigl( \lim_{n\to\infty}(\iota_{L})_\sharp \bigl((V-T_{\rho_n})\res \de B_1\bigr) \bigr)=0
	$$
	where $V$ is the tangent cone associated with the sequence $(\rho_n)_n$ and we used the fact that $V$ is a cone. It follows that both sequences $T_{r_n}$ and $T_{\rho_n}$ approach the same tangent cone $V$, and by triangle inequality we get a contradiction for $n$ big enough.
\end{proof}

\subsection{Proof of Theorem \ref{t:uniq}} Assume that $x_0=0$. We divide the proof in several steps.

\medskip

\noindent\textbf{Step 1: (Log-)Epiperimetric inequality.} Assume that for every $0<s<r<r_0$, there exists a $c$ with small $C^{1,\alpha}$, such that $T_r\res \de B_1=\bG_{\Sigma}(c)$.  By Theorem \ref{t:epi_int} there exists $\eps, C, \gamma>0$, with $\gamma\in[0,1)$ and $h\in H^{1}(\bC,\bC^\perp)$ such that
$$
\cA_{\bC}(h)\leq (1-\eps\, \cA_{\bC}(rc)^\gamma)\cA_\bC(rc)\,.
$$
Set $f(r):=\|T\|(B_r)-\Theta_\bC\,r^n$ and recall that, since $r \mapsto \|T_r\|(B_r)$ is monotone, the function
$f$ is differentiable a.e. and its distributional derivative is a measure. Its absolutely continuous part
coincides a.e. with the classical differential and its singular part is nonnegative, so that
$$
r^n\cA_{\bC}(rc)=\|0\cone (T\res \de B_r)\|(B_r)-\Theta_\bC\,r^n\leq \frac{r}{n}\,f'(r)\,.
$$ 
Using the almost minimality of $T$ and the previous two inequalities, we get
\begin{align}
f(r)&
	\leq r^n\,\cA_{\bC}(h)+C\,r^{n+\alpha}\leq (1-\eps\, \cA_{\bC}(rc)^\gamma) \frac{r}{n}\,f'(r)+ C\,r^{n+\alpha} \notag\\
	&\leq (1-\eps\, |e(r)|^\gamma) \frac{r}{n}\,f'(r)+ C\,r^{n+\alpha}\,,\notag
\end{align} 
where $e(r):=\frac{f(r)}{r^n}$. Rearranging this inequality and dividing it by $r^{n+1}$ we get
\begin{align}\label{e:decay1}
e'(r)
	&=\left(\frac{f'(r)}{r^n}-f(r)\,\frac{n}{r^{n+1}}\right)\geq n\,\eps\, \frac{e(r)^{1+\gamma}}{r(1-\eps\,|e(r)|^{\gamma})}-C \frac1{ r^{1-\alpha}}\notag\\
	&\geq n\,\eps\, \frac{e(r)^{1+\gamma}}r-C \frac1{ r^{1-\alpha}}\,.
\end{align}
We define now $\tilde e(r)= e(r)+2 \alpha^{-1} C r^\alpha$ and we notice that from the previous inequality and since $a^{1+\gamma}+b^{1+\gamma} \geq 2^{-\gamma} (a+b)^{1+\gamma}$ for any $a,b \geq 0$
\begin{equation*}
\tilde e'(r) \geq \frac{n\,\eps}{r} e(r)^{1+\gamma} + \frac{C}{r^{1-\alpha}} 
\geq \frac{n\,\eps}{r} [e(r) + {C}{r^{\frac{\alpha}{1+\gamma}}} ]^{1+\gamma}\,.
\end{equation*}
For $r$ sufficiently small, the previous inequality implies that 
\begin{equation}\label{e:decay2}
\tilde e'(r) \geq \frac{n\eps}{r} \tilde e(r)^{1+\gamma}\,.
\end{equation}
From this inequality we obtain that
$$
\frac{d}{dr}\Big(\frac{-1}{\gamma \tilde e(r)^\gamma} - n\eps \log r\Big) =\frac{1}{\tilde e(r)^{1+\gamma}}\tilde e'(r)- \frac{n\eps}{r} \geq 0
$$
and this in turn implies that $-{\tilde e(r)^{-\gamma}} - n\eps\gamma \log r$ is an increasing function of $r$, namely that $e(r)$ decays as 
$$
e(r)+2 \alpha^{-1} C r^\alpha \leq \tilde e(r) \leq ({\tilde e(r_0)^{-\gamma}+n\,\eps\, \gamma \log r_0-n\,\eps\, \gamma \log r})^{\frac {-1}{\gamma}} \leq (-n\,\eps\, \gamma \log (r/r_0))^{\frac {-1}{\gamma}}.
$$
which for $r_0$ sufficiently small implies
\begin{equation}\label{e:decay3}
e(r)\leq 2 (-n\,\eps\, \gamma \log (r/r_0))^{\frac {-1}{\gamma}} \,,\qquad s<r<r_0\,.
\end{equation}
\medskip

\noindent\textbf{Step 2: Decay of the flat norm.} Under the same assumptions as Step 1, consider the map $F(x):= \frac{x}{|x|}$ and radii $0 < s \leq r < r_0$.
By the area formula, 
\begin{align}
\mass(F_\sharp (T\res (B_r\setminus B_s))) & \leq \int_{B_r\setminus B_s}\frac{|x^\perp|}{|x|^{n+1}}\,d\|T\|
 \leq \left(\int_{B_r\setminus B_s}\frac{|x^\perp|^2}{|x|^{n+2}}\,d\|T\|\right)^{\sfrac{1}{2}}
\underbrace{\left(\int_{B_r\setminus B_s}\frac{1}{|x|^n}\,d\|T\|\right)^{\sfrac{1}{2}}}_{I_2}\notag\\
& \stackrel{\eqref{e:monotonicity formula}}{\leq} 
\left(e(r) - e(s) +C_1\, t^{\alpha_0} \right)^{\sfrac12}\,I_2.
\end{align}
We estimate $I_2$ using the graphicality of $T$ over $\bC$, that is $T\res (B_{r_0}\setminus B_{\sfrac{r_0}2})=\bG_\bC(u)$, to get
\begin{gather}
I_2^2 \leq \int_{(B_r\setminus B_s)\cap \bC} \frac1{|x|} \,Ju(x)\, dx\leq C (\log r-\log s)  \label{e:I2}
\end{gather}
In particular we conclude that
\begin{equation}\label{e:pappa}
\mass(F_\sharp (T\res (B_r\setminus B_s)))  \leq C(\log r-\log s) \left(e(r) - e(s) +C_1\, t^{\alpha_0} \right)^{\sfrac12} 
\qquad \forall\; 0 <  s \leq r < r_0,
\end{equation}
Let $0<s^{1/2}<r^{1/2}<r_0$ such that $s/r_0\in [2^{-2^{i+1}}, 2^{-2^i})$, $t/r_0\in [2^{-2^{j+1}}, 2^{-2^j})$ for some $j\leq i$ and applying the previous estimate to the exponentially dyadic decomposition, we obtain 
\begin{align}\label{e:imp_2}
\mass(F_\sharp (T\res (B_t\setminus B_s))) 
	&\leq \mass(F_\sharp (T\res (B_t\setminus B_{2^{-2^{j+1}}r_0})))  \notag\\
	&+ \mass(F_\sharp (T\res (B_{2^{-2^{i}}r_0}\setminus B_s))) +  \sum_{k=j+1}^{i-1} \mass(F_\sharp (T\res (B_{2^{-2^{k+1}}r_0}\setminus B_{2^{-2^{k}}r_0}))) \notag\\
	&\leq C \sum_{k=j}^{i} \left(\log\big(2^{-2^{k}}\big)- \log\big(2^{-2^{k+1}}\big)\right)^{1/2}\left(e\big(2^{-2^{k}}r_0\big)- e\big(2^{-2^{k+1}}r_0\big) \right)^{1/2}\notag\\
	&\leq C \sum_{k=j}^{i} 2^{k/2}e\big(2^{-2^{k}}r_0\big)^{1/2}
	\leq C \sum_{k=j}^{i} 2^{(1-1/\gamma)k/2} \notag\\
	&\leq C 2^{(1-1/\gamma)j/2} \leq C (-\log(t/r_0))^{\frac{\gamma-1}{2\gamma}}\,,
\end{align}
where $C$ is a dimensional constant that may vary from line to line.
Since $\de F_\sharp (T\res (B_r\setminus B_s)) =
\de (T_r\res B_1) - \de (T_s\res B_1)$ for a.e.~$0<s<r$, from the definition of $\mathcal{F}$ and \eqref{e:imp_2} we get
\begin{equation}\label{e:ultimo rate}
\mathcal{F}\big((T_r-T_s)\res \de B_1\big)  {\leq} C\,(-\log(r/r_0))^{\frac{\gamma-1}{2\gamma}}\,.
\end{equation}

\noindent\textbf{Step 3: Uniqueness of tangent cone.} Let $\delta=\delta(\bC)>0$ be the constant of the epiperimetric inequality, Theorem \ref{t:epi_int}, and let $\delta_1=\delta_1(\delta_0, \sfrac14, \bC)>0$ be the constant of Proposition \ref{p:sph_par} with $\tau=\sfrac14$. Moreover let $\delta_2=\delta_2(\eps_2)>0$ be the constant of Lemma \ref{l:tg_cones}. Thanks to the assumption that $\bC$ is a blow-up of $T$ at $0$, we can choose $\eps_2=\eps_2(\bC)>0$ and $r=r(\bC)>0$, with $r<\min\{\delta_2, \delta_1\}$, in such a way that
\begin{equation}\label{e:choice_const}
(\|T_{4r}\|(B_1)-\Theta_\bC)+C\,(-\log(r/r_0))^{\frac{\gamma-1}{2\gamma}}+\eps_2+\cF((T_{2r}-\bC)\res\de B_1)\leq \eta\,,
\end{equation}
where $\eta>0$ is the constant of Proposition \ref{p:sph_par}, $C$ and $\gamma$ are constants depending only on $\bC$ chosen as in \eqref{e:ultimo rate}. Notice that by Proposition \ref{p:sph_par}, the assumptions of Steps 1 and 2 are satisfied, with $t=r$ and $s=\sfrac r4$, so that by \eqref{e:ultimo rate} we get 
$$
\cF((T_r-T_{\sfrac r4})\res \de B_1)\leq C \,(-\log(r/r_0))^{\frac{\gamma-1}{2\gamma}}\,. 
$$ 
Thanks to our choice \eqref{e:choice_const} and Lemma \ref{l:tg_cones}, we can then apply Theorem \ref{t:epi_int} at the scales $[2^{-2^2}r_0,2^{-2}r_0]$, and, proceeding inductively in this way to establish \eqref{e:ultimo rate}  on exponentially dyadic scales, we conclude that the blow up is unique.
\medskip

\noindent\textbf{Step 4: proof of \eqref{e:roc1} and \eqref{e:roc2}.} The proofs of  \eqref{e:roc1} and \eqref{e:roc2} are analogous to \cite[Theorem 3.1]{DSS1} using \eqref{e:imp_2} instead of the power rate (3.13) given there.

\qed

\subsection{Proof of Corollary \ref{c:codimension1}}

We start by observing that thanks to the decomposition lemma \cite[Corollary 3.16]{Sim}, we can decompose $T= \sum_{j=-\infty}^\infty\de\a{U_j}$, with each $\de\a{U_j}$ almost area minimizing. It follows that if $\bC$ is a blow-up of $\de\a{U_J}$ at $x_0\in \supp T$, then $\bC$ is either a multiplicity-one plane or a multiplicity-one cone with $\bC\cap \de B_1$ a smooth embedded submanifold of $\de B_1$. If we can prove that each $\de\a{U_J}$ is almost area minimizing in some $\R^{n+k}$, the conclusion then follows by Theorem \ref{t:uniq}.
	
To see this it is enough to prove that $T$ is almost area minimizing on $\R^{n+k}$, where $k$ is chosen so that by Nash' theorem we can isometrically embed $N$ in $\R^{n+k}$. Indeed consider $x\in N$ and a ball $B_r (x)\subset \R^{n+k}$. If $\bar{r}$ is sufficiently small there is a well-defined $C^1$ orthogonal projection $\proj: B_{\bar{r}} (x) \to N$ with the property that $\Lip (\proj) \leq 1 + C \bA r$, where $C$ is a geometric constant and $\bA$ denotes the $L^\infty$ norm of the second fundamental form of $N$. Consider $T$ area minimizing in $N$ and assume $\bar{r}< \dist (x, \supp (\partial T))$. Let $r\leq \bar{r}$ and $S\in \bI_{n+1} (\R^{n+k})$ be such that $\supp (S)\subset B_r (x)$. We set $W:= T + \partial S$. If $\|W\| (B_r (x)) \geq \|T\| (B_r (x))$ there is nothing to prove, otherwise by the standard monotonicity formula we have 
$\|W\| (B_r (x)) \leq \|T\| (B_r (x)) \leq C r^n$. 
Then $W':= \proj_\sharp W$ is an admissible competitor for the almost minimality property of $T$ and we have 
\begin{align*}
\|T\| (B_r (x)) 
	&\leq \|W'\| (B_r (x)) +C\,r^{n+\alpha_0}\leq (\Lip (\proj))^n \|W\| (B_r (x)) 		\\
	&\leq \|W\| (B_r (x)) + C r^{n+\min\{1,\alpha_0\}}\, .
\end{align*}
\qed

\appendix

\section{Lyapunov-Schmidt reduction for the Area Functional}\label{a:LS}

We prove the following Lemma, which is adapted from \cite{Simon0}. First we need some notation; let $K := \ker \delta^2 \cA_\Sigma(0)$ and $\ell := \dim K$ which is finite by spectral theory (as $\delta^2 \cA_\Sigma(0)$ has compact resolvent). Let $P_K$ be the projection of $L^2(\Sigma; \bC^\perp)$ onto $K$ and similarly $P_{K^\perp}$ the projection onto $K^\perp$.


\begin{lemma}\label{l:LS}
There exists a neighborhood $U$ of $0$ in $C^{2,\alpha}(\Sigma; \bC^\perp)$ and an analytic map $\Upsilon: K \rightarrow K^\perp \subset C^{2}(\Sigma; \bC^\perp)$ such that 

\begin{equation}\label{e:upsilonatzero}
\Upsilon(0) = 0, \quad\mbox{and}\quad \delta \Upsilon(0) = 0,
\end{equation}
and, in addition, 
\begin{equation}\label{e:LSorth}\left\{
\begin{aligned}
&P_{K^\perp}(\delta\cA_\Sigma(\zeta+\Upsilon(\zeta)))=0, \quad \quad\quad\:\:\, \forall \zeta \in K \cap U\\
&P_{K}(\delta \cA_{\Sigma}(\zeta+ \Upsilon(\zeta))) = \nabla A(\zeta), \quad \forall \zeta \in K \cap U,
\end{aligned}\right.
\end{equation}
where $A(\zeta) = \cA(\zeta + \Upsilon(\zeta))$ for every $\zeta \in K \cap U$. Furthermore, the critical points of $\cA$ inside of $U$ are given by $$\mathcal C := \{\zeta + \Upsilon(\zeta)\mid \zeta \in U \cap K \quad \mbox{and} \quad \nabla A(\zeta) = 0\},$$ which is an analytic subvariety of the $N$-dimensional manifold given by $$\mathcal M := \{\zeta + \Upsilon(\zeta)\mid \zeta  \in U \cap K\}.$$
Finally, for all $\zeta \in U$, there is a constant $C < \infty$, such that
\begin{equation}\label{e:est_upsilon}
\|\Upsilon(P_Kc)\|\leq C\|P_Kc\|^2.
\end{equation}
\end{lemma}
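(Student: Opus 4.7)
The plan is to apply the analytic implicit function theorem in appropriate H\"older spaces to reduce the critical-point equation $\delta \cA_\Sigma(v) = 0$ to a finite-dimensional problem on $K$. Define
\[
F : (K \cap C^{2,\alpha}) \times (K^\perp \cap C^{2,\alpha}) \longrightarrow K^\perp \cap C^{0,\alpha},
\qquad F(\zeta,\eta) := P_{K^\perp}\bigl(\delta \cA_\Sigma(\zeta + \eta)\bigr).
\]
Because the area of a spherical graph is a rational expression in $u$ and $\nabla u$, $\cA_\Sigma$ is a real-analytic functional on a neighborhood of $0$ in $C^{2,\alpha}(\Sigma;\bC^\perp)$, so $F$ is an analytic map between Banach spaces. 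Item (i) of Lemma \ref{l:variations} gives $\delta \cA_\Sigma(0) = 0$, hence $F(0,0) = 0$, and differentiating in $\eta$ yields
\[
\partial_\eta F(0,0) = P_{K^\perp} \circ \delta^2 \cA_\Sigma(0)\big|_{K^\perp}.
\]
Since $\delta^2 \cA_\Sigma(0)$ is a self-adjoint elliptic operator with compact resolvent, $K$ is precisely its kernel and its restriction to $K^\perp$ is invertible; combined with Schauder theory, this gives an isomorphism $\partial_\eta F(0,0) : K^\perp \cap C^{2,\alpha} \to K^\perp \cap C^{0,\alpha}$. The analytic implicit function theorem then produces a neighborhood $V$ of $0$ in $K$ and a unique analytic map $\Upsilon: V \to K^\perp \cap C^{2,\alpha}$ with $\Upsilon(0) = 0$ and $F(\zeta,\Upsilon(\zeta)) = 0$, which is the first identity in \eqref{e:LSorth}.

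Next I would extract the remaining conclusions. Differentiating $F(\zeta,\Upsilon(\zeta)) \equiv 0$ at $\zeta = 0$ gives $\partial_\zeta F(0,0) + \partial_\eta F(0,0) \, \delta\Upsilon(0) = 0$; but $\partial_\zeta F(0,0) = P_{K^\perp}\circ \delta^2 \cA_\Sigma(0)|_K = 0$ since $K$ is the kernel, and invertibility of $\partial_\eta F(0,0)$ forces $\delta \Upsilon(0) = 0$, giving \eqref{e:upsilonatzero}. Since $\Upsilon$ is analytic, Taylor expansion at $0$ together with $\Upsilon(0) = 0$ and $\delta\Upsilon(0) = 0$ yields $\|\Upsilon(\zeta)\|_{C^{2,\alpha}} \leq C\|\zeta\|_{C^{2,\alpha}}^2$ for $\zeta$ near $0$ in $K$, applied with $\zeta = P_K c$ this is \eqref{e:est_upsilon}. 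For the second line of \eqref{e:LSorth}, I would differentiate $A(\zeta) = \cA_\Sigma(\zeta + \Upsilon(\zeta))$ in a direction $\xi \in K$:
\[
\nabla A(\zeta) \cdot \xi
= \delta \cA_\Sigma(\zeta + \Upsilon(\zeta))\bigl[\xi + \delta\Upsilon(\zeta)\xi\bigr].
\]
The vector $\delta\Upsilon(\zeta)\xi$ lies in $K^\perp$, and the first line of \eqref{e:LSorth} says that $\delta \cA_\Sigma(\zeta + \Upsilon(\zeta))$ pairs trivially with every element of $K^\perp$; thus $\nabla A(\zeta)\cdot \xi = \delta \cA_\Sigma(\zeta + \Upsilon(\zeta))\,\xi = \langle P_K \delta \cA_\Sigma(\zeta + \Upsilon(\zeta)), \xi\rangle$, proving the identity.

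Finally, the characterization of $\mathcal{C}$ falls out directly: if $\nabla A(\zeta) = 0$, then combining with $P_{K^\perp}\delta \cA_\Sigma(\zeta + \Upsilon(\zeta)) = 0$ shows that $\zeta + \Upsilon(\zeta)$ is a critical point of $\cA_\Sigma$; conversely, if $v \in U$ is a critical point, set $\zeta := P_K v$ and $\eta := P_{K^\perp} v$, so $F(\zeta,\eta) = 0$, and uniqueness in the implicit function theorem gives $\eta = \Upsilon(\zeta)$ and then $\nabla A(\zeta) = 0$. The set $\mathcal M = \{\zeta + \Upsilon(\zeta)\}$ is an analytic graph over $V \subset K$, hence an $\ell$-dimensional real-analytic manifold, and $\mathcal{C}$ is cut out inside it by the analytic equation $\nabla A(\zeta) = 0$, so $\mathcal{C}$ is an analytic subvariety of $\mathcal{M}$. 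The main subtlety, and only nontrivial point, is verifying that the area functional on graphs is genuinely analytic as a Banach-space-valued map so that the \emph{analytic} (not merely $C^k$) implicit function theorem applies; this is needed both to obtain the \L ojasiewicz inequality for $A$ used in the proof of Theorem \ref{t:epi_int} and to conclude analyticity of $\mathcal{C}$.
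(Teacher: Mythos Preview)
Your proof is correct and follows essentially the same Lyapunov--Schmidt strategy as the paper: reduce the equation $P_{K^\perp}\delta\cA_\Sigma=0$ via an implicit/inverse function theorem in H\"older spaces, using that $\delta^2\cA_\Sigma(0)$ is an isomorphism on $K^\perp$ by ellipticity and Schauder estimates. The only cosmetic difference is that the paper packages the argument through the inverse function theorem applied to the single map $\mathcal N(\zeta):=P_{K^\perp}\delta\cA_\Sigma(\zeta)+P_K\zeta$ and then sets $\Upsilon:=P_{K^\perp}\circ\mathcal N^{-1}$, whereas you apply the implicit function theorem directly to $F(\zeta,\eta)=P_{K^\perp}\delta\cA_\Sigma(\zeta+\eta)$; your derivation of the second line of \eqref{e:LSorth} (using that $\delta\Upsilon(\zeta)\xi\in K^\perp$ is annihilated by $\delta\cA_\Sigma(\zeta+\Upsilon(\zeta))$ via the first line) is in fact a bit cleaner than the paper's version.
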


\begin{proof}
Define the operator, $$\mathcal N(\zeta) := P_{K^\perp} \delta \mathcal A_\Sigma(\zeta) + P_K \zeta: L^2(\Sigma; \bC^\perp) \rightarrow L^2(\Sigma; \bC^\perp).$$ Since $\bC$ is a critical point for $\mathcal A_\Sigma$ we see that $\mathcal N(0) = 0$. Furthermore, 
$$\delta \mathcal N(0)[\zeta] = \frac{d}{dt} \mathcal N(t\zeta)|_{t= 0} = P_{K^\perp} \delta^2 \mathcal A_\Sigma(0)[\zeta, -] + P_K \zeta.$$ In particular, $\delta \mathcal N(0)$ has trivial kernel. Then Schauder estimates (applied to $-\Delta_\Sigma - B^T B - (n-1) + P_K$), imply that $\delta \mathcal N(0)$ is an isomorphism (in a neighborhood of zero) from $C^{2,\alpha}(\Sigma, \bC^\perp)$ to $C^{0,\alpha}(\Sigma, \bC^\perp)$. 

We apply the inverse function theorem to $\mathcal N$ in this neighborhood, producing the map $\Psi := \mathcal N^{-1}$ which is a bijection from a neighborhood of $0$, $W \subset C^{0,\alpha}(\Sigma; \bC^\perp)$ to $U$, a neighborhood of $0$ in $C^{2,\alpha}(\Sigma; \bC^\perp)$. 

We claim our desired map is given by $\Upsilon := P_{K^\perp}\circ \Psi: K \rightarrow K^\perp$. The first conclusion of \eqref{e:upsilonatzero} is trivial as $\Upsilon(0) = \Upsilon(\mathcal N(0)) = P_{K^\perp}(\Psi(\mathcal N(0))) = 0$. To see the second assertion we notice the more general property, that for every $\zeta \in K$ \begin{equation}\label{e:dupinthedirectionK} \delta \Upsilon(\zeta)[\eta] = (P_{K^\perp} \delta \Psi(\zeta))[\eta] = 0, \forall \eta \in K,\end{equation} by the linearity of $P_{K^\perp}$. 

To check \eqref{e:LSorth}, we first notice that \begin{equation}\label{e:zetaintermsofpsi}\zeta = \mathcal N(\Psi(\zeta)) = P_{K^\perp}\delta \cA_\Sigma(\Psi(\zeta)) + P_K\Psi(\zeta).\end{equation} Applying $P_K$ or $P_{K^\perp}$ to both sides of that equation we get $$P_{K} \zeta = P_K \Psi(\zeta) \quad\mbox{and}\quad P_{K^\perp} \zeta =  P_{K^\perp}\delta \cA(\Psi(\zeta)).$$ Plugging the first identity into the second we obtain $P_{K^\perp} \zeta = P_{K^\perp} \delta \cA(P_K \zeta + \Upsilon(\zeta)),$ which implies, for $\zeta \in K\cap U$, that $0 = P_{K^\perp} \delta \cA(\zeta + \Upsilon(\zeta)).$ 

To prove the second line of \eqref{e:LSorth}, we compute, for any $\eta \in K$; $$\begin{aligned} \left\langle \nabla A(\zeta),\eta\right\rangle = \delta \cA_\Sigma(\zeta + \Upsilon(\zeta))[\eta+ \delta \Upsilon(\zeta)[\eta]]
\stackrel{\eqref{e:dupinthedirectionK}}{=} \delta \cA_\Sigma(\zeta + \Upsilon(\zeta))[\eta],
\end{aligned}$$ 
which implies the second claim of \eqref{e:LSorth} (as $\eta \in K$ is arbitrary).

To see that all critical points are given by $\zeta  +\Upsilon(\zeta)$ we turn to \eqref{e:zetaintermsofpsi}. Let $\eta$ be an arbitrary critical point of $\cA_\Sigma$, in a neighborhood of zero. We write $\eta = \Psi(\zeta)$, and \eqref{e:zetaintermsofpsi} reads $\zeta = P_K \eta.$ Which implies $$\eta = P_K \eta + P_{K^{\perp}} \eta = \zeta + P_{K^\perp} \Psi(\zeta) = \zeta + \Upsilon(\zeta),$$ as desired (the condition on $\nabla A$ follows trivially from \eqref{e:LSorth}). 

Finally, to prove \eqref{e:est_upsilon} we must show that if $\eta \in K$ and $\mathcal N(\zeta) = \eta$ then $\|P_{K^\perp} \zeta \| \leq \|\eta\|^2$. Note that $\mathcal N(\zeta) = \eta$ implies that $\zeta = \zeta_{\perp} + \eta$, where $\zeta_{\perp}\in K^\perp$ and $\delta \cA_{\Sigma}(\zeta)[\phi] = 0$ for any $\phi \in K^\perp$. We can approximate $$\begin{aligned}\delta \cA_{\Sigma}(\zeta)[\phi] = \delta \cA_{\Sigma}(0)[\phi] + \delta^2 \cA_{\Sigma}(0)[\zeta, \phi] + O(\|\zeta\|^2)\|\phi\|\quad\text{and}\quad
\delta^2 \cA_{\Sigma}(0)[\zeta_{\perp}, \phi] = O(\|\zeta\|^2)\|\phi\|.\end{aligned}$$
Write $\zeta_{\perp} = \sum a_i \xi_i$ where $\{\xi_i\}$ is the orthonormal basis of $\delta^2 \cA_\Sigma(0)$ and where $a_i = 0$ if $\lambda_i = 0$. Letting $\phi = \sum_{\{\lambda_i > 0\}} a_i \xi_i - \sum_{\{\lambda_i < 0\}} a_i \xi_i$ above yields \eqref{e:est_upsilon}.
\end{proof}

\bibliographystyle{plain}
\bibliography{references-Cal}

\end{document}